\newcommand{\A}{\mathbb{A}}
\newcommand{\Q}{\mathbb{Q}}
\newcommand{\Z}{\mathbb{Z}}
\newcommand{\F}{\mathbb{F}}
\renewcommand{\P}{\mathfrak{P}}
\newcommand{\p}{\mathfrak{p}}
\renewcommand{\q}{\mathfrak{q}}
\newcommand{\calC}{\mathcal C}
\newcommand{\calD}{\mathcal D}
\newcommand{\calE}{\mathcal E}
\newcommand{\calF}{\mathcal F}
\newcommand{\calO}{\mathcal O}
\newcommand{\calP}{\mathcal P}
\newcommand{\calR}{\mathcal R}
\renewcommand{\to}{\rightarrow}
\newcommand{\disc}{\operatorname{disc}}
\newcommand{\Gal}{\operatorname{Gal}}
\newcommand{\Spec}{\operatorname{Spec}}
\newtheorem*{thm*}{Theorem}
\newtheorem{thm}{Theorem}[section]
\newtheorem{lem}[thm]{Lemma}
\newtheorem{prop}[thm]{Proposition}
\theoremstyle{definition}
\newtheorem{rem}[thm]{Remark}
\newtheorem{alg}[thm]{Algorithm}
\numberwithin{equation}{section}
\begin{document}

\title[Explicit Hilbert Irreducibility]{Galois groups over rational function fields and Explicit Hilbert Irreducibility}

\author[D. Krumm]{David Krumm}
\address{Mathematics Department\\
Reed College}
\email{dkrumm@reed.edu}
\urladdr{http://maths.dk}

\author[N. Sutherland]{Nicole Sutherland}
\address{Computational Algebra Group\\
School of Mathematics and Statistics\\
The University of Sydney}
\email{nicole.sutherland@sydney.edu.au}

\keywords{Hilbert Irreducibility Theorem, Galois groups, Function fields}
\subjclass[2010]{11R32, 11R09}

\begin{abstract}
Let $P\in\mathbb Q[t,x]$ be a polynomial in two variables with rational coefficients, and let $G$ be the Galois group of $P$ over the field $\mathbb Q(t)$. It follows from Hilbert's Irreducibility Theorem that for most rational numbers $c$ the specialized polynomial $P(c,x)$ has Galois group isomorphic to $G$ and factors in the same way as $P$. In this paper we discuss methods for computing the group $G$ and obtaining an explicit description of the exceptional numbers $c$, i.e.,  those for which $P(c,x)$ has Galois group different from $G$ or factors differently from $P$. To illustrate the methods we determine the exceptional specializations of three sample polynomials. In addition, we apply our techniques to prove a new result in arithmetic dynamics.
\end{abstract}

\maketitle

\section{Introduction}\label{intro_section} 

Let $P\in \Q[t,x]$ be nonconstant in the variable $x$, and let $G$ be the Galois group of $P$ over the function field $\Q(t)$. For any rational number $c$ we may consider the polynomial $P_c=P(c,x)\in\Q[x]$ and its Galois group, which we denote by $G_c$. Hilbert's Irreducibility Theorem (henceforth abbreviated HIT) implies that as $c$ varies in $\Q$, most specializations $P_c$ have Galois group isomorphic to $G$ and factor in the same way as $P$. However, there may exist rational numbers $c$ such that $G_c$ is not isomorphic to $G$ or $P_c$ factors differently from $P$; we will call the set of all such numbers the \textit{exceptional set} of $P$, denoted $\calE(P)$. The main purpose of this article is to develop a method for obtaining an explicit description of the set $\calE(P)$.

A standard step in the proof of HIT is to show that there exist a finite set $D\subset\Q$ and algebraic curves $C_1,\ldots, C_r$ having the following property: for $c\in\Q\setminus D$, $c$ belongs to the set $\calE(P)$ if and only if $c$ is a coordinate of a rational point on one of the curves $C_i$. Our method for explicitly describing the exceptional set of $P$ is based on a constructive proof of this result, which we summarize in the following theorem.

\begin{thm*}[see \ref{main_hit_thm}]\label{main_hit_thm_intro}
Let $\Delta(t)$ and $\ell(t)$ be the discriminant and leading coefficient of $P$, respectively. Let $M_1,\ldots, M_r$ be representatives of all the conjugacy classes of maximal subgroups of $G$. For $i=1,\ldots, r$, let $F_i$ be the fixed field of $M_i$ and let $f_i(t,x)$ be a monic irreducible polynomial in $\Q[t][x]$ such that $F_i/\Q(t)$ is generated by a root of $f_i(t,x)$. Suppose that $c\in \Q$ satisfies
\begin{equation}\label{intro_exceptional_set_eq}
\Delta(c)\cdot\ell(c)\cdot\prod_{i=1}^r\disc f_i(c,x)\ne 0.
\end{equation}
Then $c\in\calE(P)$ $\iff$ there is an index $i$ such that $f_i(c,x)$ has a root in $\Q$.
\end{thm*}

It should be noted that versions of this theorem can be found in the literature; see, for instance, \cite[\S3.1]{debes-walkowiak}. However, we are not aware of any reference proving the result for reducible polynomials $P$, or explicitly identifying a finite set that needs to be excluded, as in \eqref{intro_exceptional_set_eq}.

It follows from the theorem that we may take $D$ to be the set of all $c\in\Q$ for which \eqref{intro_exceptional_set_eq} does not hold, and we may take $C_i$ to be the plane curve defined by the equation $f_i(t,x)=0$. The problem of explicitly describing the set $\calE(P)$ can therefore be reduced to the following: 
\begin{enumerate}
\item Compute the polynomials $f_i$, and
\item Determine all the rational points on the curves $C_i$.
\end{enumerate}

At present there is no hope of an algorithmic solution to the problem of determining the set of rational points on an algebraic curve, though there are several techniques available for approaching the problem on an \emph{ad hoc} basis. A survey of current methods may be found in \cite{stoll_survey}. We give in \S\ref{examples_section} a few relatively simple examples using some of these methods; more sophisticated examples can be found in the articles \cite{poonen} and \cite{flynn-poonen-schaefer}.

In contrast to step (2) above, all of the computational tools needed for the first step are currently available. To achieve step (1), one must begin by computing a permutation representation of the Galois group $G$; this can be done using methods of Fieker and Kl\"uners \cite{fieker-kluners}. Though these authors mainly discuss the case of irreducible polynomials over $\Q$, their methods can be extended to work more generally. For instance, Fieker \cite{fieker_implementation} adapted the algorithm to compute Galois groups of irreducible polynomials over $\Q(t)$. In the present paper we discuss the modifications needed for Fieker's implementation and we further extend the method so that it applies to reducible polynomials over $\Q(t)$. This generalized algorithm for computing Galois groups over $\Q(t)$ has been implemented by the second author and is included in \textsc{Magma} V2.24 \cite{bosma-cannon-playoust}.

Once the group $G$ has been computed, its maximal subgroups can be obtained using an algorithm of Cannon and Holt \cite{cannon-holt}. Finally, the fixed field of any subgroup of $G$ can be computed using known methods; see \cite[\S 3.3]{kluners-malle} and our discussion in \S\ref{fixed_field_section}. Hence, given the polynomial $P$ it is possible to compute defining equations for the curves $C_i$. Functionality for this computation is available in \textsc{Magma} via the intrinsic \texttt{HilbertIrreducibilityCurves}.

In summary, by using currently available methods in computational group theory and Galois theory, and by applying techniques for determining rational points on curves, it is possible in many cases to obtain a complete characterization of the exceptional set of a given polynomial $P\in\Q[t,x]$.

This article is organized as follows. We devote \S\ref{hit_section} to the proof of the above theorem, and \S\ref{algorithm_section} to a discussion of the algorithms for computing Galois groups and fixed fields over $\Q(t)$. In order to illustrate the process described earlier, we include three examples in \S\ref{examples_section}.

The first example concerns the polynomial $P(t,x)=x^6+t^6-1$, which has a finite exceptional set. The case $n=3$ of Fermat's Last Theorem implies that the only rational numbers $c$ for which $P_c$ has a rational root are 0 and $\pm 1$. We prove that in fact 0 and $\pm 1$ are the only rational numbers for which $P_c$ is reducible.

In the second example we consider the polynomial $P(t,x)=x^6 - 4x^2 - t^2$, which is irreducible and has Galois group isomorphic to the symmetric group $S_4$. Our analysis will show that, in addition to the obvious reducible specialization $P_0$, there is an infinite family of reducible specializations. More precisely, we prove that for $c\ne 0$,
\[P_c\;\text{is reducible if and only if}\;\; c=\frac{v^4+16}{8v}\;\;\text{for some}\;\;v\in\Q.\]

Moreover, when $c$ has the above form we show that $P_c$ factors as a product of two irreducible cubic polynomials.

The third example relates to the polynomial $P(t,x)=3x^4-4x^3+1+3t^2$, which is one polynomial in a family discussed by Serre \cite[\S4.5]{serre_topics}. The Galois group of $P$ is isomorphic to the alternating group $A_4$, so a typical specialization $P_c$ will have Galois group $G_c\cong A_4$. However, there are infinitely many exceptions to this: we prove that
\[G_c\not\cong A_4\iff c=\frac{v^3 - 9v}{9(1 - v^2)}\;\;\text{for some}\;v\in\Q.\]

Furthermore, for numbers $c$ of the above form, we determine precisely which groups $G_c$ arise as $v$ varies. We show in particular that the groups $(\Z/2\Z)\times(\Z/2\Z)$ and $\Z/2\Z$ arise for infinitely many such numbers $c$.

In addition to these detailed examples with polynomials $P$ of small degree, we briefly discuss two additional examples with polynomials of degrees 12 and 30.

In \S\ref{dynamics_section} we apply our methods to prove a new result in arithmetic dynamics. Let $\phi(z)\in\Q(z)$ be a rational function, and for $n\ge 1$ let $\phi^n$ denote the $n$-fold composition of $\phi$ with itself. We say that a rational number $x$ is \textit{periodic} for $\phi$ if there exists $n\ge 1$ such that $\phi^n(x)=x$; in that case, the least such $n$ is called the \textit{period} of $x$.  An important open problem in arithmetic dynamics is a uniform boundedness conjecture of Morton and Silverman \cite{morton-silverman} which in particular would imply the following: there exists a constant $M$ such that for every rational function $\phi(z)$ of degree 2 and every period $n>M$, $\phi$ has no rational point of period $n$. This conjecture has been refined in various special cases. For example, Poonen \cite{poonen} studied the family of maps of the form $\phi(z)=z^2+c$ and Manes \cite{manes} studied maps of the form $\phi(z)=kz+b/z$. Manes conjectures that no such map can have a rational point of period $n>4$, and shows that there exist at most finitely many such maps having a rational point of period 5. We prove the following stronger statement: for all but finitely many maps of the form $\phi(z)=kz+b/z$, there exists a positive proportion of prime numbers $p$ such that $\phi(z)$ does not have a point of period 5 in the $p$-adic field $\Q_p$.

\section{An explicit form of HIT}\label{hit_section}

Let $k$ be a field of characteristic 0 and let $P(t,x)\in k[t,x]$ be a polynomial of degree $n\ge 1$ in the variable $x$. We will henceforth regard $P$ as an element of the ring $k(t)[x]$ and assume that $P$ is separable. We define the \textit{factorization type} of $P$, denoted $\calF(P)$, to be the multiset consisting of the degrees of the irreducible factors of $P$. 

Let $N/k(t)$ be a splitting field of $P$ and let $G=\Gal(N/k(t))$ be the Galois group of $P$. We assume that $G$ is nontrivial. For every element $c\in k$, let $P_c$ denote the specialized polynomial $P(c,x)\in k[x]$. The Galois group and factorization type of $P_c$ will be denoted by $G_c$ and $\calF(P_c)$, respectively.

It follows from HIT that there is a thin\footnote{See \cite[\S 3.1]{serre_topics} for a definition of thin sets.} subset of $k$ outside of which we have $\calF(P_c)=\calF(P)$ and $G_c\cong G$. We define the \textit{exceptional set} of $P$, denoted $\calE(P)$, to be the set of all elements $c\in k$ for which either one of these conditions fails to hold:
\[\calE(P)=\{c\in k\;\vert\;\calF(P_c)\ne\calF(P)\;\text{or}\;G_c\not\cong G\}.\]
Our aim in this section is to prove a version of HIT from which one can deduce a method for explicitly describing the set $\calE(P)$; our main result in this direction is Theorem \ref{main_hit_thm} below.

It should be noted that the expert will be familiar with several of the results proved in this section. However, we have included complete proofs of most statements due to the lack of a reference treating this subject at the desired level of generality, in particular allowing the polynomial $P$ to be reducible.

Let $\Delta(t)$ and $\ell(t)$ be the discriminant and leading coefficient of $P$, respectively, and let $A\subset k(t)$ be the ring
\begin{equation}\label{A_defn}A=k[t]\left[\ell(t)^{-1}\right],\end{equation}
i.e., the localization of $k[t]$ by the multiplicative set generated by $\ell(t)$. 

For every intermediate field $F$ between $k(t)$ and $N$, let $\calO_F$ denote the integral closure of $A$ in $F$. Note that $\calO_F/A$ is an extension of Dedekind domains with $A$ being a PID. By a \textit{prime} of $F$ (or of $\calO_F)$ we mean a maximal ideal of $\calO_F$. If $\p$ is a prime of $A$ and $\q$ is a prime of $\calO_F$, we denote by $\kappa(\q)$ and $\kappa(\p)$ the residue fields of $\q$ and $\p$, respectively. Thus,
\[\kappa(\q)=\calO_F/\q,\;\;\kappa(\p)=A/\p.\]
If $\q$ divides $\p\calO_F$, we denote the ramification index and residual degree of $\q$ over $\p$ by $e(\q/\p)$ and $f(\q/\p)$, respectively.

For every prime $\P$ of $N$, let $G_{\P}$ be the decomposition group of $\P$ over $k(t)$ and let $Z_{\P}$ be the decomposition field of $\P$, i.e., the fixed field of $G_{\P}$. We refer the reader to \cite[Chap. I, \S\S 8-9]{neukirch} for the standard material on decomposition groups and ramification used in this section. 

If $c\in k$ is any element satisfying $\ell(c)\ne 0$, the evaluation homomorphism $k[t]\to k$ given by $a(t)\mapsto a(c)$ extends uniquely to a homomorphism $A\to k$. Let $\p_c$ be the kernel of this map. We will henceforth identify the residue field $\kappa(\p_c)$ with $k$ via the map $a(t)\bmod\p_c\mapsto a(c)$. Note that with this identification, if $f(t,x)\in A[x]$ is an arbitrary polynomial, then upon reducing the coefficients of $f$ modulo $\p_c$ we obtain the specialized polynomial $f(c,x)\in k[x]$.

It will be necessary for our purposes in this section to be able to determine how the prime $\p_c$ factors in any intermediate field $F$ between $k(t)$ and $N$. Recall that by a well-known theorem of Dedekind and Kummer, for all but finitely many primes $\p$ of $A$, the factorization of $\p$ in $F$ can be determined by choosing an integral primitive element $\theta$ of $F/k(t)$ and factoring its minimal polynomial modulo $\p$. The finite set of primes that need to be excluded are those that are not relatively prime to the conductor of the ring $A[\theta]$; see \cite[p. 47, Prop. 8.3]{neukirch} for details. The following lemma provides sufficient conditions on $c\in k$ so that $\p_c$ will be relatively prime to this conductor, and therefore the Dedekind--Kummer criterion can be applied to $\p_c$.

\begin{lem}\label{conductor_lem}
Let $F$ be an intermediate field between $k(t)$ and $N$ with primitive element $\theta\in\calO_F$ having minimal polynomial $f(t,x)\in A[x]$. Let
\[\mathfrak F=\{\alpha\in\calO_F\;\vert\;\alpha\cdot\calO_F\subseteq A[\theta]\}\]
be the conductor of the ring $A[\theta]$. Suppose that $c\in k$ satisfies
\[\ell(c)\cdot\disc f(c,x)\ne 0.\]
Then $\p_c\calO_F$ is relatively prime to $\mathfrak F$. Furthermore, $\p_c$ is unramified in $F$.
\end{lem}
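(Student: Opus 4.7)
The plan is to reduce both assertions to the PID form of the index--discriminant relation
\[\disc(f)\cdot A \;=\; [\calO_F:A[\theta]]^{2}\cdot\disc(\calO_F/A)\]
as ideals of $A$. Here $\calO_F$ is a finitely generated torsion-free module over the PID $A$, hence free of rank $n=[F:k(t)]$, and $[\calO_F:A[\theta]]$ denotes the principal ideal generated by the determinant of any change-of-basis matrix between $A$-bases of $\calO_F$ and $A[\theta]$. The identity can be proved by the usual argument: it is local at each prime of $A$, where $A_{\p}$ is a DVR and the classical computation for rings of integers goes through verbatim.

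Since $f$ is monic in $x$, specialization at $t=c$ commutes with the discriminant, and so the hypothesis $\disc f(c,x)\ne 0$ says exactly that $\disc(f)\notin\p_c$; the condition $\ell(c)\ne 0$ is precisely what makes $\p_c$ a well-defined prime of $A$ to begin with. By the displayed relation, $\p_c$ divides neither the relative discriminant $\disc(\calO_F/A)$ nor the index ideal $[\calO_F:A[\theta]]$. The first divisibility fact yields the unramifiedness statement immediately, via the standard theorem that a prime of $A$ ramifies in $F$ if and only if it divides the relative discriminant.

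For the conductor assertion I would argue by localization at $\p_c$. The $A$-module $\calO_F/A[\theta]$ is finitely generated and torsion, so its support over the PID $A$ is exactly the set of prime divisors of the index $[\calO_F:A[\theta]]$; since $\p_c$ is not in this set, $(\calO_F)_{\p_c}=A[\theta]_{\p_c}$. By definition $\mathfrak F$ is the $A[\theta]$-annihilator of $\calO_F/A[\theta]$, so the vanishing of this module at $\p_c$ forces $\mathfrak F_{\p_c}=(\calO_F)_{\p_c}$. Equivalently, $\mathfrak F\not\subseteq\q$ for every prime $\q$ of $\calO_F$ lying over $\p_c$, and therefore $\mathfrak F+\p_c\calO_F=\calO_F$, which is precisely the coprimality conclusion.

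The only point requiring real care is invoking the index--discriminant identity for the mildly nonstandard PID $A=k[t][\ell(t)^{-1}]$ rather than the customary $\Z$; beyond that, both conclusions are short consequences of Nakayama-style localization and the classical discriminant criterion for ramification in extensions of Dedekind domains.
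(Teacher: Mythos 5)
Your proof is correct, but it follows a different route from the paper's. The paper establishes the coprimality claim in one line from the elementary inclusion $\delta\cdot\calO_F\subseteq A[\theta]$ (Cramer's rule, as in Neukirch's Lemma 2.9): this puts $\delta=\disc(f)$ inside $\mathfrak F$, so any common prime divisor $\q$ of $\mathfrak F$ and $\p_c\calO_F$ would force $\delta\in\q\cap A=\p_c$, i.e.\ $\disc f(c,x)=\delta(c)=0$. It then deduces unramifiedness from Dedekind--Kummer (now applicable at $\p_c$) together with the separability of $f(c,x)$. You instead invoke the index--discriminant identity $\disc(f)\cdot A=[\calO_F:A[\theta]]^2\cdot\disc(\calO_F/A)$ over the PID $A$, read off that $\p_c$ divides neither factor, get unramifiedness from the discriminant criterion for ramification (valid here since $k$ has characteristic $0$, so residue extensions are separable), and get coprimality by localizing: $\p_c$ prime to the index gives $(\calO_F)_{\p_c}=A[\theta]_{\p_c}$, hence $\mathfrak F_{\p_c}$ is the unit ideal and $\mathfrak F+\p_c\calO_F=\calO_F$. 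Both arguments are sound; yours is logically self-contained and proves unramifiedness directly from the relative discriminant without routing through Dedekind--Kummer, at the cost of carrying the heavier index--discriminant machinery (and the freeness of $\calO_F$ over $A$), whereas the paper's is shorter and needs only the containment $\delta\in\mathfrak F$ plus the Dedekind--Kummer theorem it was going to use anyway in the subsequent results.
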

\begin{proof}
Let $\delta\in A$ be the discriminant of $f$. By a linear algebra argument (see Lemma 2.9 in \cite[p. 12]{neukirch}) we have $\delta\cdot\calO_F\subseteq A[\theta]$ and therefore $\delta\in\mathfrak F$. Suppose that $\q$ is a prime of $F$ dividing both $\mathfrak F$ and $\p_c\calO_F$. Since $\mathfrak F\subseteq\q$ we have $\delta\in\q$, so $\delta\in\q\cap A=\p_c$. By definition of $\p_c$ this implies that $\disc f(c,x)=\delta(c)=0$, which is a contradiction. Therefore $\p_c$ must be relatively prime to $\mathfrak F$. 

The Dedekind--Kummer theorem now allows us to relate the factorization of $\p_c$ in $F$ to the factorization of $f(c,x)$ in $k[x]$. In particular, the theorem implies that if $\p_c$ is ramified in $F$, then $f(c,x)$ has a repeated irreducible factor, which contradicts our assumption that $\disc f(c,x)\ne 0$. Therefore $\p_c$ must be unramified in $F$.
\end{proof}

\begin{lem}\label{unramified_lem}
Suppose that  $c\in k$ satisfies $\Delta(c)\cdot\ell(c)\ne 0$. Then the prime $\p_c$ is unramified in $N$.
\end{lem}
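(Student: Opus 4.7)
The plan is to exhibit an integral generating set $\theta_1, \ldots, \theta_n$ of $N$ over $A$ whose residues at any prime of $N$ above $\p_c$ are pairwise distinct; once this is done, a standard inertia argument forces the inertia subgroup of such a prime to be trivial.

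First I would handle the fact that $P$ need not be monic by passing to the polynomial
\[Q(t,y) = \ell(t)^{n-1}\, P\!\left(t,\, y/\ell(t)\right) \in k[t][y],\]
which is monic of degree $n$ in $y$. If $\alpha_1, \ldots, \alpha_n$ denote the roots of $P$ in $N$, then the elements $\theta_i := \ell(t)\alpha_i$ are the roots of $Q$; they are integral over $k[t] \subseteq A$, hence lie in $\calO_N$, and $N = k(t)(\theta_1, \ldots, \theta_n)$. A short computation comparing $\prod_{i<j}(\theta_i-\theta_j)^2$ with $\prod_{i<j}(\alpha_i-\alpha_j)^2$ yields the identity
\[\disc Q \;=\; \ell(t)^{(n-1)(n-2)}\, \Delta(t)\]
in $k[t]$, so under the hypothesis $\Delta(c)\ell(c)\ne 0$ the specialized polynomial $Q(c,y)\in k[y]$ is separable of degree $n$.

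Now I would fix a prime $\P$ of $N$ above $\p_c$ and write $\bar\theta_i$ for the image of $\theta_i$ in the residue field $\kappa(\P)$. Reducing the identity $Q(t,\theta_i)=0$ modulo $\P$ shows that each $\bar\theta_i$ is a root of $Q(c,y)$, and separability of $Q(c,y)$ forces the $n$ residues to be pairwise distinct. If $\sigma$ lies in the inertia subgroup of $G_{\P}$, then $\sigma$ permutes $\{\theta_1, \ldots, \theta_n\}$ (since it fixes $Q \in k(t)[y]$) and acts trivially on $\kappa(\P)$, so $\overline{\sigma(\theta_i)} = \bar\theta_i$ for every $i$; distinctness of the residues then forces $\sigma(\theta_i) = \theta_i$, and since the $\theta_i$'s generate $N$ over $k(t)$ we conclude $\sigma = 1$. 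Hence the inertia subgroup is trivial and $\p_c$ is unramified in $N$. The only subtle point is the opening reduction: the generators of $N$ over $k(t)$ must be genuinely integral over $A$ for reduction modulo $\P$ to make sense, which is precisely why it is essential that $\ell$ be inverted in the definition of $A$.
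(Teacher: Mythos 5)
Your proof is correct, but it takes a genuinely different route from the paper's. The paper writes $N$ as the compositum of the fields $k(t)(\theta)$ as $\theta$ runs over the roots of $P$, invokes the fact that a prime unramified in each of finitely many subfields is unramified in their compositum, and then handles each $k(t)(\theta)$ via the Dedekind--Kummer criterion through the conductor bound of Lemma \ref{conductor_lem}, the point being that the discriminant of the monic minimal polynomial of $\theta$ divides $\Delta$ in $A$. You instead work directly in $N$: after monicizing $P$ to obtain integral generators $\theta_i=\ell(t)\alpha_i$ with $\disc Q=\ell^{(n-1)(n-2)}\Delta$, you show that reduction modulo a prime $\P$ above $\p_c$ is injective on $\{\theta_1,\ldots,\theta_n\}$ and conclude that the inertia group is trivial (which, since the residue fields have characteristic $0$, is equivalent to $e(\P/\p_c)=1$). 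Your argument is more self-contained---it needs neither the compositum lemma nor Dedekind--Kummer---and it is essentially the mechanism the paper deploys later in Proposition \ref{decomposition_group_prop}, where injectivity of reduction on the roots is the key step; in effect you establish unramifiedness and the faithfulness of reduction on the roots in one pass, and your monicization also makes transparent why the roots of $P$ themselves lie in $\calO_N$. Two minor points of hygiene: distinctness of the residues $\bar\theta_i$ is cleanest obtained by reducing the factorization $Q(t,y)=\prod_i(y-\theta_i)$ coefficientwise, so that $Q(c,y)=\prod_i(y-\bar\theta_i)$ and separability of the left-hand side forces the linear factors on the right to be distinct (merely knowing that each $\bar\theta_i$ is a root of a separable polynomial does not by itself exclude collisions); and your formula for $\disc Q$ presumes the convention $\disc P=\ell^{2n-2}\prod_{i<j}(\alpha_i-\alpha_j)^2$ for non-monic $P$, which is the standard one and harmless here because $\ell(c)\ne 0$.
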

\begin{proof}
Since $N$ is the compositum of the fields $k(t)(\theta)$ as $\theta$ ranges over all the roots of $P$ in $N$, it suffices to show that $\p_c$ is unramified in every such field. (See \cite[p. 119, Cor. 8.7]{lorenzini}.) Thus, let $\theta\in \calO_N$ be any root of $P$ and let $F=k(t)(\theta)$. Let $Q\in k[t][x]$ be an irreducible factor of $P$ having $\theta$ as a root. Dividing $Q$ by its leading coefficient we obtain a monic irreducible polynomial $f\in A[x]$ having $\theta$ as a root; it follows that $f$ is the minimal polynomial of $\theta$ over $k(t)$. Let $\delta\in A$ be the discriminant of $f$. Since $f$ divides $P$ in $A[x]$, $\delta$ divides $\Delta$ in $A$. Hence, the hypothesis that $\Delta(c)\ne 0$ implies that $\delta(c)\ne 0$. By Lemma \ref{conductor_lem}, $\p_c$ is unramified in $F$.
\end{proof}

We recall the notion of an isomorphism of group actions. If $\mathcal G$ and $\mathcal H$ are groups acting on sets $X$ and $Y$, respectively, then we say that there is an isomorphism of group actions between $\mathcal G$ and $\mathcal H$ if there exist an isomorphism $\phi:\mathcal G\to\mathcal H$ and a bijection $\sigma:X\to Y$ such that $\sigma(g\cdot x)=\phi(g)\cdot\sigma(x)$ for all $g\in\mathcal G$ and all $x\in X$.

\begin{prop}\label{decomposition_group_prop}
Suppose that $c\in k$ satisfies $\Delta(c)\cdot\ell(c)\ne 0$, and let $\P$ be a prime of $N$ dividing $\p_c$. Then there is an isomorphism of group actions $G_{\P}\cong G_c$, where $G_{\P}$ acts on the roots of $P$ and $G_c$ acts on the roots of $P_c$.
\end{prop}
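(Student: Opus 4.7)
The plan is to exploit Lemma \ref{unramified_lem} together with the standard fact that at an unramified prime, the decomposition group is canonically isomorphic to the Galois group of the residue field extension. The desired isomorphism $G_\P \cong G_c$ will be built as a composition: first identify $G_\P$ with $\Gal(\kappa(\P)/k)$ via reduction modulo $\P$, then restrict automorphisms to the splitting field of $P_c$ inside $\kappa(\P)$.

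First I would set up the bijection of roots. Since $\ell(c) \ne 0$, the leading coefficient $\ell(t)$ is a unit in $A$, so dividing $P$ by $\ell$ produces a monic polynomial in $A[x]$, and the roots $r_1, \ldots, r_n$ of $P$ lie in $\calO_N$. Let $R = \{r_1, \ldots, r_n\}$ and write $\overline{r}_i \in \kappa(\P)$ for the image of $r_i$ under reduction modulo $\P$. Reducing the factorization $P = \ell \cdot \prod_i (x - r_i)$ modulo $\P$ shows that the $\overline{r}_i$ are precisely the roots of $P_c$ in $\kappa(\P)$. The discriminant identity $\Delta = \ell^{2n-2} \prod_{i<j}(r_i - r_j)^2$ together with $\Delta(c) \ne 0$ forces the $\overline{r}_i$ to be pairwise distinct, so $r_i \mapsto \overline{r}_i$ is a bijection between $R$ and the root set of $P_c$.

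Next I would build the group homomorphism. By Lemma \ref{unramified_lem} the prime $\p_c$ is unramified in $N$, so the standard construction gives a well-defined isomorphism $\rho \colon G_\P \to \Gal(\kappa(\P)/k)$ via $\rho(\sigma)(x + \P) = \sigma(x) + \P$ (Neukirch Chap.~I, \S9). Let $L \subseteq \kappa(\P)$ be the splitting field of $P_c$ over $k$, which is generated by the $\overline{r}_i$; since $L/k$ is Galois and contained in the Galois extension $\kappa(\P)/k$, restriction yields a surjection $\Gal(\kappa(\P)/k) \twoheadrightarrow \Gal(L/k) = G_c$. Composing with $\rho$, we obtain $\Phi \colon G_\P \to G_c$, and the equivariance $\Phi(\sigma)(\overline{r}_i) = \overline{\sigma(r_i)}$ is immediate from the definition of $\rho$; once $\Phi$ is a bijection, the isomorphism of group actions follows.

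The main remaining step is injectivity of $\Phi$; surjectivity is automatic since both $\rho$ and the restriction map are surjective. The essential argument is: if $\Phi(\sigma)$ acts trivially on $L$, then $\rho(\sigma)$ fixes every $\overline{r}_i$, so by the injectivity of reduction on $R$ established above, $\sigma$ fixes every $r_i \in R$; since $N = k(t)(R)$ and $\sigma$ already fixes $k(t)$, this forces $\sigma = \mathrm{id}$. The only genuine subtlety is verifying that the unramified decomposition-group theorem applies in our function-field setting with $A = k[t][\ell^{-1}]$, which is exactly the generality covered by the cited references.
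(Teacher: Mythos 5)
Your proposal is correct and follows essentially the same route as the paper: both use Lemma \ref{unramified_lem} to identify $G_\P$ with $\Gal(\kappa(\P)/k)$, use $\Delta(c)\neq 0$ to show reduction modulo $\P$ is injective on the roots, and then deduce triviality of the relevant kernel from the fact that $N$ is generated over $k(t)$ by the roots of $P$. The only cosmetic difference is that the paper proves directly that $\kappa(\P)$ equals the splitting field of $P_c$, whereas you compose with the restriction to that splitting field and prove the composite injective — the two are the same computation.
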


\begin{proof}
For every element $a\in\calO_N$ let $\bar a$ denote the image of $a$ under the quotient map $\calO_N\to \kappa(\P)$. Recall that the extension $\kappa(\P)/k$ is Galois and that there is a surjective homomorphism $G_{\P}\to\Gal(\kappa(\P)/k)$ given by $\sigma\mapsto\bar\sigma$, where $\bar\sigma(\bar a)=\overline{\sigma(a)}$ for every $a\in\calO_N$. Furthermore, since $\p_c$ is unramified in $N$ by Lemma \ref{unramified_lem}, this map is an isomorphism. We claim that $\kappa(\P)$ is a splitting field for $P_c$.

Note that if $\alpha\in\calO_N$ is a root of $P$, then $\bar \alpha\in\kappa(\P)$ is a root of $P_c$. Moreover, if $\alpha$ and $\beta$ are distinct roots of $P$, then $\bar \alpha\ne\bar \beta$; indeed, this follows from the fact that $\bar\Delta=\Delta(c)\ne 0$. Thus, reduction modulo $\P$ is an injective map from the set of roots of $P$ to the set of roots of $P_c$.

Let $x_1,\ldots, x_n\in\calO_N$ be the roots of $P$ in $N$, and let $S=k(\bar x_1,\ldots, \bar x_n)$. Clearly $S$ is a splitting field for $P_c$, and $k\subseteq S\subseteq \kappa(\P)$. We will prove that $S=\kappa(\P)$ by showing that the group $\Gal(\kappa(\P)/S)$ is trivial. Let $\tau\in\Gal(\kappa(\P)/S)$ and let $\sigma\in G_{\P}$ be the element such that $\bar\sigma=\tau$. Since $\tau$ is the identity map on $S$, we have $\tau(\bar x_i)=\bar x_i$ for every index $i$, and hence $\overline{\sigma(x_i)}=\bar x_i$ for all $i$. Since $\sigma(x_i)$ and $x_i$ are roots of $P$, this implies that $\sigma(x_i)=x_i$. Thus, $\sigma$ fixes every root of $P$, so $\sigma$ is the identity element of $G_{\P}$. Hence $\tau=\bar\sigma$ is the identity element of $\Gal(\kappa(\P)/S)$. This proves that $\Gal(\kappa(\P)/S)$ is trivial and therefore $\kappa(\P)=S$ is a splitting field for $P_c$.

The map $\sigma\mapsto\bar\sigma$ is thus an isomorphism $G_{\P}\to\ G_c$. Moreover, the fact that $\sigma(x_i)=x_j\iff \bar\sigma(\bar x_i)=\bar x_j$ implies that the actions of $G_{\P}$ and $G_c$ are isomorphic.
\end{proof}

\begin{rem}
In the case where the polynomial $P$ is irreducible, Proposition \ref{decomposition_group_prop} follows from Theorem 2.9 in \cite[Chap. VII, \S2]{lang_algebra}.
\end{rem}

\begin{lem}\label{decomposition_field_lem}
Let $\p$ be a prime of $A$ and let $\P$ be a prime of $N$ dividing $\p$. Then the following hold:
\begin{enumerate}
\item Setting $\mathfrak Q=\P\cap Z_{\P}$, we have $e(\mathfrak Q/\p)=f(\mathfrak Q/\p)=1$.
\item Let $F$ be an intermediate field between $k(t)$ and $N$, and let $\q=\P\cap F$. If $e(\q/\p)=f(\q/\p)=1$, then $F\subseteq Z_{\P}$.
\end{enumerate}
\end{lem}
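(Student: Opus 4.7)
The plan is to deduce both statements from $efg$-counting in the Galois extensions $N/Z_\P$ and $N/F$, combined with the Galois correspondence. Since $N/k(t)$ is Galois, so is every intermediate $N/F$, and the fundamental identity
\[\sum_{\P'}e(\P'/\p')\,f(\P'/\p')=[N:F]\]
together with the transitivity of the relevant Galois action on primes above a given $\p'$ collapses each sum to a single group-theoretic quantity.

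For part (1), I would first observe that $\P$ is the unique prime of $N$ lying above $\mathfrak Q$. Indeed, $N/Z_\P$ is Galois with group $G_\P$, and by the very definition of the decomposition group, the stabilizer of $\P$ inside $G_\P$ is $G_\P$ itself. Consequently
\[e(\P/\mathfrak Q)\,f(\P/\mathfrak Q)=[N:Z_\P]=|G_\P|.\]
The same counting over $\p$ (with $[G:G_\P]$ primes above $\p$, each contributing equally by transitivity) yields $e(\P/\p)\,f(\P/\p)=|G_\P|$ as well. Multiplicativity of $e$ and $f$ in the tower $k(t)\subseteq Z_\P\subseteq N$ then forces $e(\mathfrak Q/\p)=f(\mathfrak Q/\p)=1$.

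For part (2), set $H=\Gal(N/F)$; by the Galois correspondence it suffices to show $G_\P\subseteq H$. Applying the same counting inside the Galois extension $N/F$, the decomposition group of $\P$ over $F$ is the stabilizer of $\P$ in $H$, which is exactly $H\cap G_\P$. Hence
\[e(\P/\q)\,f(\P/\q)=|H\cap G_\P|.\]
The hypothesis $e(\q/\p)=f(\q/\p)=1$ together with multiplicativity in towers gives $e(\P/\q)\,f(\P/\q)=e(\P/\p)\,f(\P/\p)=|G_\P|$, so $|H\cap G_\P|=|G_\P|$ and therefore $G_\P\subseteq H$.

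Neither step presents a genuine obstacle: both reduce to the same routine $efg$-count in a Galois tower. The only point requiring care is the correct identification of the decomposition group of $\P$ in each of $N/Z_\P$ and $N/F$ as $G_\P$ and $H\cap G_\P$ respectively, which is immediate from the definition of the decomposition group as a stabilizer inside the ambient Galois group.
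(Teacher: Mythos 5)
Your proof is correct. The paper gives no argument of its own here --- it simply cites \cite[p.~55, Prop.~9.3]{neukirch} and \cite[p.~118, Prop.~8.6]{lorenzini} --- and your orbit--stabilizer/$efg$-counting argument, identifying the decomposition groups in $N/Z_{\P}$ and $N/F$ as $G_{\P}$ and $H\cap G_{\P}$ and using multiplicativity of $e$ and $f$ in towers, is essentially the standard proof found in those references.
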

\begin{proof}
See \cite[p. 55, Prop. 9.3]{neukirch} and \cite[p. 118, Prop. 8.6]{lorenzini}.
\end{proof}

\begin{prop}\label{decomposition_field_prop}
Let $F$ be an intermediate field between $k(t)$ and $N$. Let $\theta\in\calO_F$ be a primitive element for $F/k(t)$ and let $f(t,x)\in A[x]$ be its minimal polynomial. Suppose that $c\in k$ satisfies 
\[\Delta(c)\cdot\ell(c)\cdot\disc f(c,x)\ne 0.\]
Then the following are equivalent:
\begin{enumerate}
\item The polynomial $f(c,x)$ has a root in $k$.
\item There exists a prime $\q$ of $F$ dividing $\p_c$ such that $f(\q/\p_c)=1$.
\item There exists a prime $\P$ of $N$ dividing $\p_c$ such that $F\subseteq Z_{\P}$.
\item There exists a prime $\P$ of $N$ dividing $\p_c$ such that $G_{\P}\subseteq \Gal(N/F)$.
\end{enumerate}
\end{prop}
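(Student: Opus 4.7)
The plan is to prove pairwise equivalences, exploiting the fact that the hypothesis on $c$ makes Lemmas \ref{conductor_lem}, \ref{unramified_lem}, and \ref{decomposition_field_lem} all applicable. Specifically, I will establish (1) $\iff$ (2) via Dedekind-Kummer, (2) $\iff$ (3) via Lemma \ref{decomposition_field_lem} together with multiplicativity of ramification indices and residual degrees in towers, and (3) $\iff$ (4) by a direct application of the Galois correspondence.

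For (1) $\iff$ (2), the hypothesis $\ell(c)\cdot\disc f(c,x)\ne 0$ is exactly what Lemma \ref{conductor_lem} requires, so $\p_c\calO_F$ is relatively prime to the conductor of $A[\theta]$ and $\p_c$ is unramified in $F$. By Dedekind-Kummer, the factorization of $\p_c$ in $\calO_F$ mirrors the factorization of $f(c,x)$ in $k[x]$: primes $\q$ of $F$ dividing $\p_c$ correspond bijectively to irreducible factors of $f(c,x)$, with $f(\q/\p_c)$ equal to the degree of the corresponding factor. Hence some $\q$ has $f(\q/\p_c)=1$ if and only if $f(c,x)$ has a linear factor over $k$, i.e., a root in $k$.

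For (2) $\iff$ (3), suppose first (2) holds with $\q$ a prime of $F$ dividing $\p_c$ with $f(\q/\p_c)=1$. Choose any prime $\P$ of $N$ lying over $\q$; then $\P$ divides $\p_c$. Since $\p_c$ is unramified in $N$ by Lemma \ref{unramified_lem}, $e(\q/\p_c)=1$ as well, so Lemma \ref{decomposition_field_lem}(2) gives $F\subseteq Z_{\P}$. Conversely, if $F\subseteq Z_{\P}$ for some $\P$ dividing $\p_c$, put $\mathfrak Q=\P\cap Z_{\P}$ and $\q=\P\cap F$, so $\q=\mathfrak Q\cap F$. By Lemma \ref{decomposition_field_lem}(1) we have $e(\mathfrak Q/\p_c)=f(\mathfrak Q/\p_c)=1$, and the multiplicativity relations $e(\mathfrak Q/\p_c)=e(\mathfrak Q/\q)\cdot e(\q/\p_c)$ and $f(\mathfrak Q/\p_c)=f(\mathfrak Q/\q)\cdot f(\q/\p_c)$ force $f(\q/\p_c)=1$, giving (2).

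Finally, (3) $\iff$ (4) is immediate from the Galois correspondence: $F\subseteq Z_{\P}$ is equivalent to $\Gal(N/Z_{\P})\subseteq\Gal(N/F)$, and $\Gal(N/Z_{\P})=G_{\P}$ by definition of the decomposition field. The only delicate point in the whole argument is (2) $\implies$ (3), where one must ensure that a prime $\P$ of $N$ above the given $\q$ actually divides $\p_c$ (which is automatic since $\P\cap A=\q\cap A=\p_c$) and invoke unramifiedness in $N$ to convert the residual-degree condition into the hypothesis of Lemma \ref{decomposition_field_lem}(2); everything else is formal.
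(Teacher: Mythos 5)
Your proposal is correct and follows essentially the same route as the paper's proof: Dedekind--Kummer via Lemma \ref{conductor_lem} for (1) $\iff$ (2), Lemma \ref{unramified_lem} together with Lemma \ref{decomposition_field_lem} for (2) $\iff$ (3) (the paper phrases the converse via divisibility of residual degrees rather than multiplicativity, which is the same fact), and the Galois correspondence for (3) $\iff$ (4). No gaps.
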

\begin{proof}
By Lemma \ref{conductor_lem}, $\p_c$ is relatively prime to the conductor of $A[\theta]$. The Dedekind--Kummer theorem then implies that the degrees of the irreducible factors of $f(c,x)$ in $k[x]$ correspond to the residual degrees $f(\q/\p_c)$ for primes $\q$ of $F$ dividing $\p_c$. The equivalence of (1) and (2) follows immediately.

We now show that (2) and (3) are equivalent. Suppose that (2) holds, and let $\P$ be a prime of $N$ dividing $\q$. By Lemma \ref{unramified_lem}, $\p_c$ is unramified in $N$ and therefore unramified in $F$. Hence, $e(\q/\p_c)=1$. By Lemma \ref{decomposition_field_lem}, $F\subseteq Z_{\P}$. Thus, (3) holds.

Conversely, suppose that (3) holds. Let $\P$ be a prime of $N$ dividing $\p_c$ such that $F\subseteq Z_{\P}$. Let $\mathfrak Q=\P\cap Z_{\P}$ and $\q=\P\cap F$. Since $f(\mathfrak Q/\p_c)=1$ and $f(\q/\p_c)$ divides $f(\mathfrak Q/\p_c)$, we have $f(\q/\p_c)=1$. Thus, (2) holds.

The equivalence of (3) and (4) is clear.
\end{proof}

We can now prove the main result for this section.

\begin{thm}\label{main_hit_thm}
Let $M_1,\ldots, M_r$ be representatives of all the conjugacy classes of maximal subgroups of $G$. For $i=1,\ldots, r$ let $F_i$ be the fixed field of $M_i$, and let $f_i(t,x)$ be a monic irreducible polynomial in $k[t][x]$ such that $F_i/k(t)$ is generated by a root of $f_i(t,x)$. Suppose that $c\in k$ satisfies
\begin{equation}\label{hit_excluded_set}
\Delta(c)\cdot\ell(c)\cdot\prod_{i=1}^r\disc f_i(c,x)\ne 0.
\end{equation}
Then the following hold:
\begin{enumerate}
\item If  $\calF(P_c)\ne\calF(P)$, then $G_c\not\cong G$.
\item $G_c\not\cong G$ $\iff$ there is an index $i$ such that $f_i(c,x)$ has a root in $k$.
\end{enumerate}
\end{thm}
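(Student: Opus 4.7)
The plan is to reduce both claims to the two main propositions already established. First I would fix a prime $\P$ of $N$ lying above $\p_c$. Since $\Delta(c)\ell(c)\ne 0$, Proposition \ref{decomposition_group_prop} applies and gives an isomorphism of group actions between $G_{\P}$ acting on the roots of $P$ and $G_c$ acting on the roots of $P_c$. In particular $G_c\cong G_{\P}$ as abstract groups, and $G_{\P}$ is a subgroup of the finite group $G$; hence $G_c\cong G$ if and only if $G_{\P}=G$. This reduction is what drives everything.

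For part (1), I would read off the factorization types as orbit-size multisets. The multiset $\calF(P)$ is the list of sizes of $G$-orbits on the roots of $P$, while via transport of structure along the isomorphism of group actions, $\calF(P_c)$ equals the list of sizes of $G_{\P}$-orbits on the same set of roots of $P$. If these two multisets differ then the subgroup $G_{\P}$ cannot equal the whole group $G$, so $|G_c|=|G_{\P}|<|G|$ and $G_c\not\cong G$.

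For part (2), I would apply Proposition \ref{decomposition_field_prop} with $F=F_i$ and primitive element $\theta_i$ equal to a root of $f_i$; this is legitimate because $f_i$ is monic with coefficients in $k[t]\subseteq A$, so $\theta_i\in\calO_{F_i}$, and the hypothesis \eqref{hit_excluded_set} supplies the required $\disc f_i(c,x)\ne 0$. For the direction ($\Leftarrow$), if $f_i(c,x)$ has a root in $k$, then implication $(1)\Rightarrow(4)$ of that proposition yields a prime $\P'$ above $\p_c$ with $G_{\P'}\subseteq M_i\subsetneq G$; applying the preliminary reduction to $\P'$ in place of $\P$ gives $G_c\cong G_{\P'}$ of strictly smaller order than $G$. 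For the direction ($\Rightarrow$), if $G_c\not\cong G$ then $G_{\P}$ is properly contained in some maximal subgroup $M$ of $G$; I would write $M=hM_ih^{-1}$ for some $h\in G$ and some index $i$, and pass to the conjugate prime $\P'=h^{-1}(\P)$, which still lies above $\p_c$ and has decomposition group $h^{-1}G_{\P}h\subseteq M_i$. Then implication $(4)\Rightarrow(1)$ of Proposition \ref{decomposition_field_prop} delivers a rational root of $f_i(c,x)$.

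The main obstacle I anticipate is the conjugation step in the ($\Rightarrow$) direction of part (2): Proposition \ref{decomposition_field_prop} is stated for the chosen representative $M_i$ rather than an arbitrary conjugate, so one must exploit the transitive action of $G$ on the primes of $N$ above $\p_c$ to move from an arbitrary maximal subgroup containing $G_{\P}$ to the specific $M_i$. Once that bookkeeping is in place, everything else is a direct appeal to Propositions \ref{decomposition_group_prop} and \ref{decomposition_field_prop}.
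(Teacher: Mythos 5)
Your proposal is correct and follows essentially the same route as the paper: part (1) via the isomorphism of group actions from Proposition \ref{decomposition_group_prop} (the paper phrases the orbit comparison through an irreducible factor of $P$ whose specialization becomes reducible, which is your orbit-size-multiset argument in contrapositive form), and part (2) by combining that proposition with Proposition \ref{decomposition_field_prop} after conjugating the prime so that its decomposition group lands inside the chosen representative $M_i$. The only nitpick is that in the forward direction of (2) you should say $G_{\P}$ is \emph{contained in} (not necessarily properly contained in) some maximal subgroup; containment is all the argument needs.
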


\begin{proof}
We prove (1) by contradiction. Thus, suppose that $\calF(P_c)\ne\calF(P)$ and $G_c\cong G$. By Proposition \ref{decomposition_group_prop}, the latter condition implies that the group $G$ acts on the roots of $P$ in the same way that $G_c$ acts on the corresponding roots of $P_c$. Since $\calF(P_c)\ne\calF(P)$, there must be an irreducible factor $f\in k[t,x]$ of $P$ such that $f_c$ is reducible. Note that $G$ acts transitively on the roots of $f$, but since $f_c$ is reducible and separable, $G_c$ does not act transitively on its roots. Thus we have a contradiction, proving (1).

For the proof of (2), suppose that $G_c\not\cong G$ and let $\P$ be a prime of $N$ dividing $\p_c$. By Proposition \ref{decomposition_group_prop}, the group $G_{\P}$ is a proper subgroup of $G$. Replacing $\P$ by a conjugate ideal if necessary, we may therefore assume that $G_{\P}\subseteq M_i$ for some index $i$. By Proposition \ref{decomposition_field_prop} applied to the field $F_i$, this implies that $f_i(c,x)$ has a root in $k$. This proves one direction of (2). The converse follows by a similar argument.
\end{proof}

\subsection{Resolvent polynomial interpretation}
Instead of relying exclusively on the concepts and methods of algebraic number theory, it is also possible to prove a version of Theorem \ref{main_hit_thm} by using only Proposition \ref{decomposition_group_prop} together with the theory of resolvent polynomials, a standard tool in modern methods for computing Galois groups. We recall here the basic definitions and facts needed for our purposes; for further details on this topic as well as proofs of the statements made here, we refer the reader to the articles \cite{stauduhar} and \cite{hulpke}.

Let $n$ be a positive integer and let $G$ and $H$ be subgroups of the symmetric group $S_n$ with $H\le G$. The group $S_n$ acts on the polynomial ring
\[\calR:=\Z[y_1,\ldots,y_n]\] by permuting the variables; for $p\in\calR$ and $\sigma\in S_n$ we will write $p^{\sigma}$ to denote the polynomial $p(y_{\sigma(1)},\ldots, y_{\sigma(n)})$. A polynomial $J\in\calR$ is called a \emph{$G$-relative $H$-invariant} if $H$ is the stabilizer of $J$ in $G$. In that case one associates to the triple $(G,H,J)$ the \emph{resolvent polynomial} $Q=Q_{G,H,J}\in\calR[y]$ given by the formula
\[Q(y_1,\ldots, y_n, y):=\prod_{\sigma\in G//H}(y-J^{\sigma}(y_1,\ldots, y_n)),\]
where $G//H$ denotes a set of representatives for the left cosets $\sigma H$ in $G/H$.

\begin{lem}[Stauduhar \cite{stauduhar}]\label{resolvent_lem}
With notation as above, let $K$ be a field of characteristic $0$ and let $f\in K[x]$ be a separable polynomial of degree $n$. Regard $\Gal(f)$ as a subgroup of $S_n$ by fixing an ordering $\alpha_1,\ldots,\alpha_n$ of the roots of $f(x)$ in $\bar K$, and suppose that $\Gal(f)$ is contained in $G$. If $J$ is any $G$-relative $H$-invariant and $Q$ is the corresponding resolvent, then the polynomial $q(x):=Q(\alpha_1,\ldots,\alpha_n, x)$ has coefficients in $K$. Moreover, if $q(x)$ is separable, then the following are equivalent: 
\begin{enumerate}
\item There exists $\sigma\in G$ such that $\Gal(f)\subseteq\sigma^{-1}H\sigma$.
\item The polynomial $q(x)$ has a root in $K$.
\end{enumerate}
\end{lem}

Returning now to the context of Theorem \ref{main_hit_thm} (2), fix an ordering $x_1,\ldots, x_n$ of the roots of $P(t,x)$ and embed the group $G=\Gal(P)$ into $S_n$ using this ordering. Let $M_1,\ldots, M_r$ be the maximal subgroups defined in Theorem \ref{main_hit_thm}; let $Q_i\in\calR[y]$ be a resolvent polynomial corresponding to a $G$-relative $M_i$-invariant; and let $q_i(t,x)=Q_i(x_1,\ldots, x_n,x)$. By Lemma \ref{resolvent_lem} applied with $K=k(t)$ and $f=P$, the polynomials $q_i(t,x)$ have coefficients in $k(t)$; and since $q_i$ also has coefficients in $\Z[x_1,\ldots, x_n]\subseteq\calO_N$, then $q_i\in A[x]$, where $A$ is the ring defined in \eqref{A_defn}. 

Suppose now that $c\in k$ satisfies $\ell(c)\ne 0$, so that the specialized polynomials $P_c$ and $q_i(c,x)$ are well defined, and suppose that
\[\Delta(c)\cdot\prod_{i=1}^r\disc q_i(c,x)\ne 0.\]
By Proposition \ref{decomposition_group_prop} and its proof, the chosen ordering of the roots of $P$ induces an ordering $\bar x_1,\ldots, \bar x_n$ of the roots of $P_c$, and moreover, as subgroups of $S_n$ we have $G_c\le G$. Note that $q_i(c,x)=Q_i(\bar x_1,\ldots, \bar x_n,x)$.

Applying Lemma \ref{resolvent_lem} once again, but now with $K=k$ and $f=P_c$, we conclude that $G_c$ is contained in a conjugate of $M_i$ if and only if $q_i(c,x)$ has a root in $k$. We thus obtain a variant of Theorem \ref{main_hit_thm} in which the polynomials $f_i$ of the theorem are replaced with the $q_i$ defined here.

\section{Computation of Galois groups over $\Q(t)$}\label{algorithm_section}

We restrict now to the case $k=\Q$. It is clear from Theorem \ref{main_hit_thm} that in order to better understand the exceptional set of a given polynomial $P\in\Q[t][x]$ it is necessary to compute the Galois group $G$, the maximal subgroups of $G$, and their corresponding fixed fields. In this section we discuss a Galois group and fixed field algorithm over $\Q(t)$, the particulars of 
which have not been previously discussed elsewhere.

\subsection{Galois groups of irreducible polynomials over $\Q(t)$}
\label{galois_irred}
The article \cite{fieker-kluners} describes an algorithm to compute Galois groups of 
irreducible polynomials
over $\Q$. As noted in~\cite{fieker-kluners} Section 7.7, this algorithm
can be adjusted to compute Galois groups of polynomials over fields other than the rational
field. For example, \cite{sutherland, suth_thesis} discusses this for polynomials over global
rational and algebraic function fields. An algorithm for computing Galois groups of polynomials over $\Q(t)$
has been implemented in \cite{fieker_implementation} and included in {\sc Magma} V2.15. 

After providing, for ease of reference, a brief summary of the algorithm given 
in \cite{sutherland, suth_thesis} Algorithms 1 and 11, respectively,
we describe here some of the adjustments to this algorithm
that are necessary for these computations over $\Q(t)$.
We address these adjustments using the same headings as \cite{sutherland, suth_thesis}. 
Algorithm \ref{galois_group_algorithm} is a description of the algorithm of~\cite{fieker-kluners}
which uses the descent method of
Stauduhar~\cite{stauduhar} and has no degree restrictions. 

\begin{alg}[Compute the Galois group of a polynomial~(\cite{sutherland} Algorithm 1, \cite{suth_thesis} Algorithm 11)]\mbox{}\label{galois_group_algorithm}

\noindent \textit{Input:} \hspace{1mm} A polynomial $f$ of degree $n$ over $\Q[t]$.\\
\noindent\textit{Output:} The Galois group of $f$.

\begin{enumerate}
\item \label{gal_algo_split}
Compute a splitting field $S_f$ for $f$ over a completion of $F$ and roots of $f$ in $S_f$.

\item \label{gal_algo_start} Find a group $G\subseteq S_n$ which contains $\Gal(f)$
\item \label{outer_loop} While $G$ has maximal subgroups which could contain $\Gal(f)$:
\begin{enumerate}
\item \label{invar} 
For each maximal subgroup $H$ of $G$, compute a 
$G$-relative $H$-invariant polynomial for a representative maximal subgroup
$H$.

\item \label{loop_restart} 
For a cheap maximal subgroup $H$ of $G$ (Stauduhar)

\begin{enumerate}
\item \label{gal_algo_prec} Compute the precision $m$ needed in the roots of $f$
and the roots of $f$ in $S_{f}$ to precision $m$.

\item \label{short_cosets} For the representatives $\tau \in G//H$ of the right cosets of $H$ in $G$:
evaluate $I_H^{\tau}$ at the roots of $f$.
Decide whether this is the image of an element of $F$ in $S_{f}$.
If so 
$\Gal(f) \subseteq \tau H\tau^{-1}$
and restart the loop~(\ref{outer_loop}) with $G = \tau H\tau^{-1}$.

\end{enumerate}
\end{enumerate}
\item Return $G$.
\end{enumerate}
\end{alg}

We now discuss how some of the steps of the above algorithm can be carried out in the case where $f$ is irreducible. 

\label{irred}
\begin{description}[labelindent=0mm,leftmargin=0cm,itemsep=2mm]
\item[Choosing a good prime] (Step~\ref{gal_algo_split}). A good prime is necessary for computing a completion of $\Q(t)$ and a
splitting field over this completion. The image of $f$ must be squarefree over the residue field at $P$.  Instead of the completion being a $p$-adic field (completion of the rationals) or a series field over the field
of constants (completion of a global rational function field) we complete
in two directions and compute a completion as a series field over a $p$-adic 
field. For this we need two primes, an integer prime for computing a $p$-adic 
field and a polynomial prime to compute a series field over this $p$-adic 
field.

The choice of a good polynomial prime can be undertaken in the same way as for 
the global function fields; see ~\cite{sutherland} Section 3.1 or ~\cite{suth_thesis} Section 8.1. 
In contrast to the global case we consider only $n$ primes. 
Let $r_P$ be
the degree of $P$, $d_P$ the LCM of the degrees of the factors of the image 
of $f$ mapped over $\Q[x]/P$, and let $l_{f,P}$ be the number of factors of the image
of $f$ mapped over $\Q[x]/P$.
Similar to the global case 
we choose $P$ with the smallest $r_P d_P l_{f,P}^{1.5} > n/4$ if such occurs for a prime we have considered; otherwise a prime we
have considered with largest $r_P d_P l_{f,P}^{1.5} \le n/4$.

To choose a good integer
prime for the $p$-adic part of the completion we construct the number field
$K = \Q[x]/P(x)$, where $P$ is the prime polynomial chosen. We map $f$ to a 
polynomial $f_K$ over $K$ and compute a prime $p$
which is good for the computation of the Galois group of $f_K$, a polynomial
over a number field. Lemma 2.16 of \cite{geissler_klueners} contains some necessary
conditions such primes must satisfy. As discussed in~\cite{sutherland, suth_thesis} Section 3.1 and 8.1, respectively, we choose the prime $p$
so that the
extension of the $p$-adic field is not of too large degree to be expensive to 
work in nor of too small
degree that computations will require excessive precision, 

\item[Computing roots]
(Step~\ref{gal_algo_split})
We construct the field $\Q_p(\alpha)(\!(z)\!)$ which will contain all the roots of 
the image of $f$. This splitting field is a combination of the
extension $\Q_p(\alpha)$ of a $p$-adic field used 
when computing Galois groups of polynomials over $\Q$ or a 
number field and the extension $\F_{(q^{r_P})^{d_P}}(\!(z)\!)$ used when computing
Galois groups of polynomials over $\F_q(t)$ or an extension thereof.
The local field $\Q_p(\alpha)$ contains the roots of $f_K$ mapped over
the completion of $K$ at $p$.

We take $z$ as the image of $P$ in $K(\!(z)\!)$ and use the map 
$h : \Q(t) \rightarrow K(\!(z)\!)$ given by the completion mapping at $P$ into 
$K(\!(z)\!)$, and then combine with the mappings 
$K \rightarrow K_p \rightarrow K_p(\beta) = \Q_p(\alpha)$.
To compute the roots of $f$ we first compute the roots of $f_K$ in 
$\Q_p(\alpha)$ to the required $p$-adic precision
and Hensel lift to the required $P$-adic precision. 

\item[A starting group]
(Step~\ref{gal_algo_start})
Section 3.3 of \cite{sutherland, suth_thesis} applies also for polynomials over $\Q(t)$.
Subfields over $\Q(t)$ can be computing using~\cite{klueners_subfield}.

\item[Invariants]
(Step~\ref{invar})
The invariants and Tschirnhausen transformations
presented in \cite{fieker-kluners} are sufficient as all 
rings involved here have characteristic 0.

\item[Mapping back to the function field]
(Step~\ref{short_cosets})
Given a series $g \in \Q_p(\alpha)(\!(z)\!)$ we check whether the coefficients of
$g$ map back to elements of $K$. To the resulting series now in $K(\!(z)\!)$ we
apply the homomorphism which maps $z$ to $P$ and the coefficients to 
polynomials over $\Q$ using a 
homomorphism mapping the generator of $K$ to a root of $P$ in $\Q(t)/P^r$, 
where $r$ is the $P$-adic precision of $g$.
Lastly we take the remainder of this resulting polynomial mod $P^r$.

\item[Determining a descent]
(Step~\ref{short_cosets})
Most of the discussion in~\cite{sutherland} Section 3.8 
and~\cite{suth_thesis} Section 8.8 applies here, including bounding the 
degree of the evaluation of an invariant at the roots of $f$.
However, just as we required two primes, we also 
require two bounds -- one on the polynomial degree of an evaluation, and one on the size
of the coefficients of that polynomial. The minimum infinite valuation, computed in the same way as for global fields,
can be used to compute a precision for a series $M$ over the 
integers (computed using complex roots) which is a bound for the complex 
size of the integral coefficients.
This is used to compute 
a bound $B$ on the $T_2$ norm as $\deg(P)$ times the square of 
a bound on the evaluation of the invariant at a transformed root of
size $T(M)$, where $T$ is a Tschirnhausen transformation.
The absolute precision of $B$ times $\deg(P)$ is used to bound the degree
of the evaluation of an invariant mapped back to $\Q(t)$.
The maximum coefficient of $B$ is then used to bound the coefficients of this mapped
evaluation.

\item[Precision]
Since we have two completions in our splitting field construction, we require 
both a $p$-adic precision and a series precision.
These are computed from the bound computed above.
The series precision is taken to be the absolute precision of the series bound
and the $p$-adic precision is computed from the largest coefficient of the
series bound using Proposition 3.12 in~\cite{Belabas2004641}. 
\end{description}

\subsection{Galois groups of reducible polynomials over $\Q(t)$}\label{reducible_galois_section}
Section 7.6 of \cite{fieker-kluners} mentions that their algorithm can be used to compute Galois groups of reducible polynomials. Indeed it can be, but as with the different coefficient rings some adjustments
must be made. For reducible polynomials these adjustments are most specifically
in the computation of a starting group and the handling of multiple roots and linear factors.
These adjustments necessary to efficiently
compute Galois groups of reducible polynomials over global rational and algebraic function
fields are discussed in \cite{sutherland, suth_thesis} and included in {\sc Magma} V2.18. Here we
describe the similar necessary adjustments, included in {\sc Magma} V2.23, to use the algorithm of \cite{fieker-kluners} to efficiently compute Galois groups of reducible polynomials over $\Q(t)$.
We use Algorithm 2 of \cite{suth_thesis} (\cite{sutherland} Algorithm 12) and address these adjustments using the same headings used there. This algorithm uses the product of some of the Galois groups of the factors of $f$ to gain a smaller starting group
in Step \ref{gal_algo_start} of Algorithm \ref{galois_group_algorithm},
and also does some post processing with linear and multiple factors so that the descent steps are minimized.

\begin{description}[labelindent=0mm,leftmargin=0cm,itemsep=2mm]
\item[Choosing a good prime] 
The same polynomial prime and same integer prime must be used
to compute the Galois groups of each of the factors of $f$. Otherwise 
Section 4.1.1 of \cite{sutherland} (Section 9.2.1 of \cite{suth_thesis}) holds also for polynomials over $\Q(t)$.
\item[Computing roots in the splitting field over the completion]
The local field $\Q_p(\alpha)$ must be computed such that it contains 
the roots of $(f_i)_K$ over $\Q_p$ for all factors $f_i$ of $f$.
The field $\Q_p(\alpha)(\!(z)\!)$ can then be used as a splitting field.

\item[Check disjointedness of splitting fields] 
By checking whether the splitting fields of the factors are disjoint we can
reduce the size of the descent necessary by starting with a smaller group.
Since~\cite{stichtenoth} Theorem III.6.3 and Corollary III.5.8 hold when the 
constant field (in this case $\Q$) is a perfect field, the discussion
of~\cite[\S4.1.3]{sutherland} and \cite[\S9.2.3]{suth_thesis} holds when computing
Galois groups of reducible polynomials over $\Q(t)$ as well as over $\F_q(t)$.

\item[Invariants]
The invariants presented in \cite{fieker-kluners} are sufficient here as all 
the rings involved have characteristic zero.
\item[Determination] 
The computation of the precision
necessary is as in Section~\ref{galois_irred}, using the minimum infinite valuation
(negative of the maximum degree)
of all scaled roots of $f$. 
\item[Multiple and linear factors]
Section 4.1.6 of \cite{sutherland} (or Section 9.2.6 of \cite{suth_thesis}) holds also for polynomials
over $\Q(t)$.
\end{description}

\subsection{Computing a fixed field of a subgroup of a Galois group}\label{fixed_field_section}
The procedure needed for this computation, which was
implemented in {\sc Magma} by Fieker and Kl\"uners,
is independent of the coefficient ring of the polynomial. We summarize an
algorithm below.
Though the details differ between coefficient
rings, the necessary adjustments are already addressed in the various descriptions of
the Galois group algorithm given in~\cite{fieker-kluners, sutherland, suth_thesis} and above. This algorithm applies to both reducible and irreducible polynomials.

\begin{alg}[Compute a fixed field of a subgroup of a Galois group]\label{fixed_field_algorithm}
Given a subgroup $U$ of a Galois group $G$ of a polynomial $f$ of degree $n$, and given the data used to compute $G$ from $f$, compute a defining polynomial for the fixed field of $U$.
\begin{enumerate}
\item Compute a $G$-relative $U$-invariant polynomial $I$
and the right transversal $G//U$.
\item Compute a Tschirnhausen transformation $T$ such that 
$$\#\{I^{\tau}(T(r_1), \ldots, T(r_n)) : \tau \in G//U\} = \#G//U.$$
using roots $\{r_i\}_{i=1}^n$ of $f$ to some low precision in the 
splitting field used for the Galois group computation.
\item Compute a bound $B$ on the evaluation of the invariant $I$ at the roots of
$f$
and the roots $\{r_i\}_{i=1}^n$ of $f$ to a precision that allows
the bound $B$ to be used.
\item Compute the monic polynomial $g$ with roots 
$$\{I^{\tau}(T(r_1), \ldots, T(r_n)) : \tau \in G//U\}.$$
\item Map the coefficients of $g$ back to the coefficient ring of $f$, and return the 
resulting polynomial, which is a defining polynomial for the fixed field of $U$.
\end{enumerate}
\end{alg}

\begin{rem}The polynomial returned by Algorithm \ref{fixed_field_algorithm} will be of 
degree $\#(G//U)$; this can cause difficulties in practice when $G//U$ is large. An example where this type of issue occurs is discussed in \S\ref{large_example_section}.
\end{rem}

\subsection{Proof of Galois groups of polynomials over $\Q(t)$}

The correctness of Galois groups computed using lower precision than necessary, such as when $[G:H]$ is substituted with a smaller value in the precision computation, can be proved
using absolute resolvents as 
in~\cite{geissler_klueners} Algorithm 5.1 
and~\cite{fieker-kluners} Section 7.4. We consider here the adjustments to these algorithms needed over $\Q(t)$.

Suppose we know that $H \le \Gal(f) \le G$.
Algorithm 5.1 of~\cite{geissler_klueners} determines whether
$\Gal(f) \not= G$ or $\Gal(f) \not= H$ 
by computing a resultant polynomial $R$ and two factors, $f_1$ and $f_2$,
of $R$ to precision 1 based on an $H$-orbit which is not a $G$-orbit. 
This factorization is lifted to $F_1 F_2$ with
precision $k$, where $k$ is computed
from a bound $M$ on the coefficients of the factors of $R$ as in the computation of the Galois group. If $F_1$ corresponds
to a true factor of $R$, then $\Gal(f) \not= G$; otherwise $\Gal(f) \not= H$.
Letting $\{\alpha_i\}$ be the roots of $f$, we use these bounds to bound the coefficients of 
$$R(y) = \prod_{\tau} (y - I^{\tau}(\alpha_1, \ldots, \alpha_n)),$$
where $I$ is $H$-invariant, and its factors by the quantity $$\max_{1\le i\le n} \left\{\binom{\deg(R)}{i}\right\} B^{\deg(R)},$$ where
$B$ is a bound on $I^{\tau}(\alpha_1, \ldots, \alpha_n)$ obtained
as in the ``Determining a descent" step of Section~\ref{galois_irred}.

\section{Examples}\label{examples_section}
Having developed the theoretical and algorithmic material that form the core of this article, we proceed to apply our results to study the exceptional sets of three sample polynomials. The following algorithm will be our main tool.

\begin{alg}\label{hit_data_algorithm}\mbox{}

\smallskip
\noindent \textit{Input:} \hspace{1mm} A separable polynomial $P\in \Q[t][x]$.\\
\noindent\textit{Output:} A finite set $D\subset \Q$ and a finite set $S\subset \Q[t][x]$.

\begin{enumerate}
\item Create empty sets $D$ and $S$.
\item Include in $D$ all the rational roots of the discriminant of $P$.
\item Include in $D$ all the rational roots of the leading coefficient of $P$.
\item Compute the group $G=\Gal(P)$. More precisely, find a permutation representation of $G$ induced by a labeling of the roots of $P$.
\item Find subgroups $M_1,\ldots, M_r$ representing all the conjugacy classes of maximal subgroups of $G$.
\item For $M\in\{M_1,\ldots, M_r\}$: 
\begin{enumerate}
\item Find a monic irreducible polynomial $f\in \Q[t][x]$ such that the fixed field of $M$ is generated by a root of $f$.
\item Include $f$ in the set $S$.
\item Include in $D$ all the rational roots of the discriminant of $f$.
\end{enumerate}
\item Return the sets $D$ and $S$.
\end{enumerate}
\end{alg}

For Step 4 we use Algorithm \ref{galois_group_algorithm} when $P$ is irreducible and the adjustments discussed in \S\ref{reducible_galois_section} when $P$ is reducible. For Step 5 we use an algorithm of Cannon and Holt \cite{cannon-holt}. For Step 6(a) we use Algorithm \ref{fixed_field_algorithm} and adjust the polynomial returned so that it has coefficients in $\Q[t]$ and defines the same extension. All of our computations were done using \textsc{Magma} V2.23, which includes implementations of these algorithms. The intrinsic function \texttt{HilbertIrreducibilityCurves} in \textsc{Magma} V2.24 is an implementation of Algorithm \ref{hit_data_algorithm}.

For later reference we record the following consequence of Theorem \ref{main_hit_thm}.

\begin{prop}\label{algorithm_output_prop}
Let $P\in \Q[t][x]$ be a separable polynomial with Galois group $G$, and let $D$ and $S$ form the output of Algorithm \ref{hit_data_algorithm} with input $P$. Then for all $c\in \Q\setminus D$ we have:
\begin{enumerate}
\item If $\calF(P_c)\ne\calF(P)$, then $G_c\not\cong G$.
\item $G_c\not\cong G\iff$ there exists $f\in S$ such that $f(c,x)$ has a rational root.
\end{enumerate} 
\end{prop}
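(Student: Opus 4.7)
The plan is to observe that Proposition \ref{algorithm_output_prop} is essentially a direct translation of Theorem \ref{main_hit_thm} into the language of Algorithm \ref{hit_data_algorithm}, so the only work is to check that the hypothesis $c\in\Q\setminus D$ implies the nonvanishing condition \eqref{hit_excluded_set}.

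First I would unpack the definition of $D$ as produced by Algorithm \ref{hit_data_algorithm}: by construction $D$ contains every rational root of $\Delta(t)$, every rational root of $\ell(t)$, and every rational root of $\disc f(t,x)$ as $f$ ranges over $S=\{f_1,\ldots,f_r\}$. Thus for $c\in\Q\setminus D$ we have
\[
\Delta(c)\cdot\ell(c)\cdot\prod_{i=1}^r\disc f_i(c,x)\ne 0,
\]
which is exactly the hypothesis \eqref{hit_excluded_set} of Theorem \ref{main_hit_thm}. Here I would also briefly note that the groups $M_1,\ldots,M_r$ computed in step (5) of the algorithm and the polynomials $f_i$ computed in step (6a) are precisely the data required to invoke Theorem \ref{main_hit_thm}: the $M_i$ represent the conjugacy classes of maximal subgroups of $G$, and $f_i$ is a monic irreducible polynomial in $\Q[t][x]$ whose root generates the fixed field $F_i$ of $M_i$.

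With this matching of hypotheses in place, part (1) of the proposition follows immediately from part (1) of Theorem \ref{main_hit_thm}, and part (2) follows immediately from part (2) of Theorem \ref{main_hit_thm}, since ``there is an index $i$ such that $f_i(c,x)$ has a root in $\Q$'' is the same as ``there exists $f\in S$ such that $f(c,x)$ has a rational root.'' There is no real obstacle here — the content of the proposition is entirely contained in Theorem \ref{main_hit_thm}; the only thing to verify is that the finite exceptional set $D$ assembled by the algorithm captures all the rational numbers excluded by \eqref{hit_excluded_set}, which is immediate from the construction of $D$ in steps (2), (3), and (6c).
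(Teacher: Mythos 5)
Your proposal is correct and matches the paper's intent exactly: the paper states this proposition without proof as a direct consequence of Theorem \ref{main_hit_thm}, and the only content is precisely the observation you make, namely that $c\notin D$ guarantees the nonvanishing condition \eqref{hit_excluded_set} because $D$ collects the rational roots of $\Delta$, $\ell$, and the discriminants of the $f_i$.
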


\subsection{A finite exceptional set}\label{fermat_section} In our first example we consider the polynomial $P(t,x)=x^6+t^6-1$.
As follows from the case $n=3$ of Fermat's Last Theorem, the specialized polynomial $P_c$ has a rational root if and only if $c\in\{0,\pm 1\}$. We will prove the following stronger result.

\begin{prop}\label{fermat_specializations_prop}
Let $c\in\Q$. Then $P_c$ is reducible if and only if $c\in\{0,\pm1\}$.
\end{prop}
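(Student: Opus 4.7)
The ``if'' direction is immediate: $P_0 = (x-1)(x+1)(x^2-x+1)(x^2+x+1)$ and $P_{\pm 1} = x^6$ are both reducible. For the converse I apply Algorithm~\ref{hit_data_algorithm} to $P$. One checks directly that $P$ is irreducible over $\Q(t)$ and that its Galois group $G$ is the dihedral group $D_{12}$ of order $12$, realized inside $S_6$ as $\langle r,s\rangle$ with $r = (0\,1\,2\,3\,4\,5)$ and $s = (1\,5)(2\,4)$; the roots of $P$ are $\alpha\zeta_6^k$ for $k = 0,\dots,5$, and $r$, $s$ correspond respectively to the automorphisms $\alpha\mapsto\alpha\zeta_6$ and complex conjugation. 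Since $\Delta(t)$ is a nonzero constant multiple of $(t^6-1)^5$, the only rational zeros of $\Delta$ are $\pm 1$.

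The four conjugacy classes of maximal subgroups of $G$ are $C_6 = \langle r\rangle$, the intransitive $S_3 = \langle r^2, s\rangle$, the transitive $S_3 = \langle r^2, rs\rangle$, and the Klein four-group $V_4 = \langle r^3, s\rangle$. Using the primitive elements $\zeta_6$, $\alpha^3$, $\alpha^3(\zeta_6-\zeta_6^{-1})$, and $\alpha^2$ of their respective fixed fields yields
\[
f_1 = x^2-x+1,\quad f_2 = x^2+t^6-1,\quad f_3 = x^2-3t^6+3,\quad f_4 = x^3+t^6-1,
\]
each of whose discriminants vanishes in $\Q$ only at $c = \pm 1$. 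So the algorithm returns $D = \{\pm 1\}$ and $S = \{f_1,f_2,f_3,f_4\}$. Among the four maximal subgroups, $C_6$ and $\langle r^2, rs\rangle$ act transitively on the six roots while $\langle r^2, s\rangle$ and $V_4$ do not, and a quick inspection of the subgroup lattice shows that any intransitive proper subgroup of $G$ is contained in either $\langle r^2, s\rangle$ or some conjugate of $V_4$. Hence by Theorem~\ref{main_hit_thm} together with Proposition~\ref{decomposition_field_prop}, for $c \in \Q\setminus\{\pm 1\}$ the polynomial $P_c$ is reducible if and only if $f_2(c,x)$ or $f_4(c,x)$ has a rational root, equivalently if and only if $1-c^6$ is a rational square or a rational cube.

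It remains to rule out both possibilities for $c \in \Q\setminus\{0,\pm 1\}$. If $1-c^6 = y^3$ with $y\in\Q$, then $(c^2)^3 + y^3 = 1$; by Fermat's Last Theorem for exponent $3$ the only rational solutions of $X^3+Y^3 = 1$ are $(1,0)$ and $(0,1)$, which force $c \in \{0, \pm 1\}$. If $1-c^6 = y^2$, then $(u,y) = (c^2,y)$ is a rational point on the elliptic curve $E\colon y^2 = 1 - u^3$, which is $\Q$-isomorphic via $u = -x$ to the Mordell curve $y^2 = x^3+1$. The latter is classically known to have Mordell--Weil group $\Z/6\Z$ with rational points $O$, $(-1,0)$, $(0,\pm 1)$, $(2,\pm 3)$, so $E(\Q) = \{O,(1,0),(0,\pm 1),(-2,\pm 3)\}$. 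Since $u = c^2$ must be a nonnegative rational square, only $u \in \{0,1\}$ are possible, again giving $c\in\{0,\pm 1\}$. The main obstacle is the Mordell--Weil computation for $E$, but this is standard (a short $2$-descent suffices).
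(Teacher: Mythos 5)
Your proof is correct, and it takes a genuinely different route from the paper's in the key reduction step. The paper runs Algorithm~\ref{hit_data_algorithm} as a black box, obtains four defining polynomials (one per conjugacy class of maximal subgroups of $G\cong D_{12}$), and then must rule out a rational root of \emph{each} of them, since its Proposition~\ref{algorithm_output_prop} only links reducibility of $P_c$ to the condition $G_c\not\cong G$; this costs four separate Diophantine analyses, including the rank-zero curve 36a3 and a sum-of-squares impossibility in addition to two reductions to 36a1. You instead exploit the explicit dihedral structure: by Proposition~\ref{decomposition_group_prop} reducibility of the (separable) $P_c$ is equivalent to intransitivity of the decomposition group, and your subgroup-lattice check that every intransitive proper subgroup lies in $\langle r^2,s\rangle$ or a conjugate of $V_4$ lets you invoke Proposition~\ref{decomposition_field_prop} for only the two intransitive maximal classes. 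This is exactly the finer analysis the paper itself carries out in its $A_4$ example (Proposition~\ref{serre_ex_subgroups}), so the ingredients are all available. The payoff is that you need only two arithmetic facts --- Fermat for exponent $3$ and the Mordell--Weil group of $y^2=x^3+1$ (the paper's 36a1, whose point list matches yours) --- and your hand-computed fixed fields ($x^2+t^6-1$, $x^3+t^6-1$, etc.) are easily checked to generate the same fields as the paper's \textsc{Magma} output. The trade-off is that your argument proves only the reducibility statement as claimed, whereas the paper's four-case version additionally establishes the stronger fact that $G_c\cong G$ for all $c\notin\{0,\pm1\}$.
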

\begin{proof}
Suppose that $P_c$ is reducible and that $c\notin\{0,\pm1\}$. Applying Algorithm \ref{hit_data_algorithm} to the polynomial $P$ we obtain the set $\{-1,1\}$ and the polynomials
\begin{align*}
F_1(t,x) &= x^2 - 2^8\cdot 3^5(t^6-1)^5,\\
F_2(t,x) &= x^2 + 64\cdot 27(t^6-1)^2,\\
F_3(t,x) &= x^2 + 6x + 9t^6,\\
F_4(t,x) &= x^3 + 12x^2 + 48x  - 8t^6 + 72.
\end{align*}

By Proposition \ref{algorithm_output_prop}, at least one of the polynomials $F_i(c,x)$ must have a rational root; we accordingly divide the proof into four cases.

\underline{\textit{Case 1:}} There exists $r\in\Q$ such that $F_1(c,r)=0$. Defining $u=c^2$ and
\[v=r/\left(2^4\cdot 3^2\cdot(c^6 - 1)^2\right),\]
the equation $F_1(c,r)=0$ implies that $v^2=3(u^3-1).$ This equation defines the elliptic curve with Cremona label 36a3, which has rank 0, and its only affine rational point is $(1, 0)$. It follows that $u=1$ and thus $c=\pm 1$, which is a contradiction. Hence this case cannot occur.

\underline{\textit{Case 2:}} There exists $r\in\Q$ such that $F_2(c,r)=0$. Letting $u=8\cdot 3\cdot(c^6-1)$,
we have $u\ne 0$ and $r^2+3u^2=0$, which is clearly impossible. Thus we have a contradiction.

\underline{\textit{Case 3:}} There exists $r\in\Q$ such that $F_3(c,r)=0$. Letting $v=(r+3)/3$ and $u=-c^2$,
the equation $F_3(c,r)=0$ implies that $v^2=u^3+1$. This equation defines the elliptic curve with Cremona label 36a1, which has rank 0 and a torsion subgroup of order 6; its only affine rational points are $(0,\pm 1)$, $(2,\pm 3)$, and $(-1,0)$. Since $u<0$, we must have $u=-1$ and therefore $c^2=1$, which is a contradiction.

\underline{\textit{Case 4:}} There exists $r\in\Q$ such that $F_4(c,r)=0$. Letting $y=2c^3$, the equation $F_4(c,r)=0$ implies that $2y^2=r^3+12r^2+48r+72$. This equation defines the elliptic curve with Cremona label 36a1, the same curve that appeared in the previous case. Using the above model of the curve, the affine rational points are $(0,\pm 6)$, $(-4,\pm 2)$, and $(-6,0)$.
It follows that $y=\pm 6, \pm 2$, or 0, which implies that $c^3=\pm 3$, $c=\pm 1$, or $c=0$, all of which yield a contradiction.

Since every case has led to a contradiction, we conclude that $c\in\{0,\pm 1\}$.
\end{proof}

\subsection{An infinite family of exceptional factorizations}\label{S4_example_section} Let $P(t,x)=x^6 - 4x^2 - t^2$, which is an irreducible polynomial with Galois group isomorphic to the symmetric group $S_4$. In this example we will determine precisely for which rational numbers $c$ the specialization $P_c$ is reducible, and how $P_c$ factors in that case.

\begin{prop}\label{S4_example_exceptional_set}
Let $c$ be a nonzero rational number such that $P_c$ is reducible. Then $c$ has the form $c=(v^4+16)/(8v)$ for some $v\in\Q$.
\end{prop}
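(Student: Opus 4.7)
The plan is to apply Theorem \ref{main_hit_thm} with a convenient choice of $f_i$ for each conjugacy class of maximal subgroups of $G$. The group $S_4$ has exactly three such classes, represented by $A_4$, $D_4$, $S_3$, of indices $2$, $3$, $4$. If $P_c$ is reducible and $c$ avoids the finite exclusion set of Theorem \ref{main_hit_thm}, then since $\calF(P) = \{6\}$ we have $\calF(P_c) \ne \calF(P)$ and therefore $G_c \not\cong G$, so at least one $f_i(c,x)$ has a rational root. The finitely many excluded values of $c$ can be disposed of by direct case analysis.

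For the choice of $f_i$, substituting $y = x^2$ in $P$ yields the cubic $y^3 - 4y - t^2$, so a root $\beta = \alpha^2$ (with $\alpha$ a root of $P$) has stabilizer of order $8$ in $G$, isomorphic to $D_4$, and $f_2(t,x) := x^3 - 4x - t^2$ is a valid choice for the index-$3$ class. For the index-$4$ class (representative $S_3$), the key observation is that the resolvent cubic of $x^4 + tx + 1$ equals $z^3 - 4z - t^2$; hence $x^4 + tx + 1$ and $P$ share the same splitting field over $\Q(t)$, and a root of $x^4 + tx + 1$ generates the fixed field of a point-stabilizer $S_3$. Thus $f_3(t,x) := x^4 + tx + 1$ is a valid choice. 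Finally, the $A_4$ fixed field is $\Q(t)(\sqrt{\disc P})$, so $f_1$ may be taken to be a quadratic of the form $x^2 - g(t)$ with $g$ the squarefree part of $\disc P$.

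With these choices the analysis proceeds case by case. A rational root $r$ of $f_2(c,x)$ gives $c^2 = r^3 - 4r$, i.e.\ a rational point on the elliptic curve $y^2 = x^3 - 4x$; this curve has Mordell--Weil rank $0$ and its affine rational points are $(0,0)$ and $(\pm 2, 0)$, each forcing $c = 0$, which is excluded by hypothesis. A similar rank-$0$ elliptic curve computation handles $f_1$, yielding no nonzero exceptional $c$. The essential case is $f_3$: a rational root $r$ of $x^4 + cx + 1$ gives $c = -(r^4 + 1)/r$, and the reparametrization $v = -2r$ transforms this into $c = (v^4 + 16)/(8v)$, establishing the claim. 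The main obstacles I anticipate are (a) carrying out the rank computation for the elliptic curve arising from $f_1$, and (b) verifying by direct computation that each $c$ in the finite exclusion set either leaves $P_c$ irreducible or already has the parametric form.
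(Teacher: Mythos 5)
Your overall strategy is the same as the paper's: apply Theorem \ref{main_hit_thm} (via Proposition \ref{algorithm_output_prop}) to the three conjugacy classes of maximal subgroups of $S_4$ and analyze the three resulting curves. Your choices for the index-$3$ and index-$4$ classes are correct and agree with the paper's resolvents up to affine changes of variable ($x^3-4x-t^2$ becomes the paper's $F_3$ under $x\mapsto (x+8)/2$ and scaling, and $x^4+tx+1$ becomes the paper's $F_1$ under $x\mapsto x/2$, $t\mapsto -t$), and your treatment of those two cases --- the rank-$0$ curve $y^2=x^3-4x$ and the parametrization $c=(v^4+16)/(8v)$ --- is correct.

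The gap is in your $f_1$. The fixed field of the index-$2$ subgroup $A_4$ is \emph{not} $\Q(t)(\sqrt{\disc P})$. Here $G\cong S_4$ acts on the six roots $\pm\sqrt{\beta_i}$ of $P$ through its action on the six edges of $K_4$ (the three blocks $\{\pm\sqrt{\beta_i}\}$ correspond to the roots of your cubic), and every transposition of $S_4$ acts on the six edges as a product of two $2$-cycles, hence evenly. Consequently $\disc P = 64\,t^2\,(256-27t^4)^2$ is already a square in $\Q(t)$: its squarefree part is trivial, $x^2-g(t)$ is reducible, and it does not generate the $A_4$ fixed field, so it is not an admissible $f_i$ in Theorem \ref{main_hit_thm}. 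The correct quadratic resolvent is $x^2-(256-27t^4)$, i.e.\ the discriminant of your quartic $x^4+tx+1$ (equivalently of the cubic $x^3-4x-t^2$); this is the paper's $F_2$ up to the square factor $64$. You must then show that $x^2=256-27c^4$ has no rational solution with $c\ne 0$; this is a genuine computation (the paper maps the curve to $Y^2=X^3+108X$, which has rank $0$ and only the affine point $(0,0)$, forcing $c=0$). So the case you waved off as ``a similar rank-$0$ elliptic curve computation'' is indeed of that type, but it must be applied to a different curve than the one your recipe produces. (For this particular proposition one could instead bypass the $A_4$ class entirely by noting that a reducible $P_c$ forces the decomposition group to be intransitive on the six roots, whereas $A_4$ is transitive in the edge action; but that argument is not in your write-up.)
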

\begin{proof}
Applying Algorithm \ref{hit_data_algorithm} to $P$ we obtain the set $\{0\}$ and the polynomials
\begin{align*}
F_1(t,x) &= x^4 - 8tx + 16,\\
F_2(t,x) &= x^2 + 1728t^4 - 16384,\\
F_3(t,x) &= x^3 + 24x^2 + 176x - 8t^2 + 384.
\end{align*}

By Proposition \ref{algorithm_output_prop}, one of the polynomials $F_i(c,x)$ must have a rational root. We will show that $i$ cannot be 2 or 3, from which the proposition follows easily.

Suppose that $F_2(c,x)=0$ for some $x\in\Q$. Letting
\[X=\frac{x+128}{4c^2}\;\;\text{and}\;\;Y=\frac{2x+256}{c^3}\]

we obtain $Y^2=X^3+108X$. This equation defines an elliptic curve with exactly two rational points, namely the point at infinity and the point $(0,0)$. Hence we must have $X=Y=0$, which implies that $x=-128$. However, the equation $F_2(c,-128)=0$ implies that $c=0$, which is a contradiction.

By a similar argument one can show that the only rational solutions to the equation $F_3(t,x)=0$ are $(0,-4)$, $(0,-8)$, and $(0,-12)$; hence $F_3(c,x)=0$ is impossible for $x\in\Q$ since $c\ne 0$.
\end{proof}

\begin{lem}\label{S4_example_genus3_lem}
Let $\calC$ and $\calD$ be the curves in $\A^2=\Spec\Q[v,x]$ defined by the equations $8vx^3 - 8v^2x^2 + 4v^3x - v^4 - 16=0$ and $8vx^3 + 8v^2x^2 + 4v^3x + v^4 + 16=0$, respectively. Then $\calC(\Q)=\calD(\Q)=\emptyset$.
\end{lem}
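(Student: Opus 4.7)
The plan is to map each of $\calC$ and $\calD$ into a rank-zero elliptic curve and verify that none of the few rational points on the target curve pulls back to a rational point on the source.

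First, the substitution $v\mapsto -v$ sends the defining equation of $\calC$ to $-1$ times the defining equation of $\calD$, so $(v,x)\mapsto (-v,x)$ gives a bijection $\calC(\Q)\to\calD(\Q)$. It therefore suffices to prove that $\calC(\Q)=\emptyset$.

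To treat $\calC$, set $w=2x-v$ and check the polynomial identity
\[8vx^3-8v^2x^2+4v^3x-v^4=vw(v^2+vw+w^2).\]
Thus a rational point on $\calC$ is the same thing as a pair of rationals $(v,w)$ satisfying $vw(v^2+vw+w^2)=16$; in particular $v,w\ne 0$. Setting $s=v+w$ and $p=vw$ and using the identity $v^2+vw+w^2=s^2-p$, the equation becomes $p^2-ps^2+16=0$. Viewed as a quadratic in $p$, its discriminant $s^4-64$ must be a rational square, so any rational point on $\calC$ produces a rational point $(s,m)$ on the affine curve $m^2=s^4-64$, and then via $(X,Y)=(s^2,sm)$ a rational point on the elliptic curve $E\colon Y^2=X^3-64X$.

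The curve $E$ is the congruent-number curve for $n=2$; since $2$ is a non-congruent number (a classical result of Fermat), $E(\Q)$ consists exactly of the four 2-torsion points $\{O,(0,0),(8,0),(-8,0)\}$. Tracing each of these back requires $s^2\in\{0,8,-8\}$. The values $s^2=\pm 8$ are impossible in $\Q$, while $s=0$ forces $w=-v$ and reduces the equation $vw(v^2+vw+w^2)=16$ to $-v^4=16$, which has no rational solution. Hence $\calC(\Q)=\emptyset$, completing the proof. The main obstacle is really bookkeeping: verifying the polynomial identity above and confirming that the rank of $E$ is zero, either via $2$-descent or by an appeal to \textsc{Magma}.
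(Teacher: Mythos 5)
Your proof is correct. The overall strategy is the same as the paper's (cover a rank-zero elliptic curve and rule out the preimages of its few rational points), and in fact your substitution $w=2x-v$ is exactly what underlies the paper's map: the paper sends $(v,x)\mapsto\bigl((2x-v)/v,\,-4/v^2\bigr)$, which transforms the relation $vw(v^2+vw+w^2)=16$ directly into the Weierstrass curve $Y^2=X^3+X^2+X$ (rank $0$, with only $\infty$ and $(0,0)$ rational), and concludes at once since $-4/v^2\ne 0$. Where you diverge is the second half of the reduction: passing to the symmetric functions $s=v+w$, $p=vw$, extracting the condition that the discriminant $s^4-64$ be a square, and landing on $Y^2=X^3-64X$. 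That is a different target curve (not isogenous to the paper's --- yours has conductor $256$, the paper's has conductor dividing $48$), and your route trades the paper's one-line contradiction for a short case analysis over the four $2$-torsion points; in exchange you get to quote the classical fact that $2$ is not a congruent number rather than a database lookup, and your symmetry observation $(v,x)\mapsto(-v,x)$ disposes of $\calD$ cleanly where the paper merely asserts the argument is similar. Two pedantic points: your $E$ is literally the congruent-number curve for $n=8$, which is $\Q$-isomorphic to the $n=2$ curve $Y^2=X^3-4X$, so the appeal to the non-congruence of $2$ is legitimate but deserves that one-word justification; and the conclusion $E(\Q)=\{O,(0,0),(8,0),(-8,0)\}$ uses not only rank $0$ but also that the torsion of a congruent-number curve is exactly $(\Z/2\Z)^2$, a standard fact worth citing.
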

\begin{proof}
The curves $\calC$ and $\calD$ are both non-hyperelliptic curves of genus 3, but they admit a map to an elliptic curve of rank 0; this allows us to determine their rational points. We give the proof only for $\calC$, since the argument is very similar for $\calD$.

There is a map from $\calC$ to the elliptic curve $E:Y^2 =X^3 + X^2 + X$ given by $(v,x)\mapsto\left((2x - v)/v, (-4)/v^2\right)$. The curve $E$ has rank 0, and its only rational points are $\infty$ and $(0,0)$. Any rational point on $\calC$ must necessarily have $v\ne 0$, and will therefore map to the point $(0,0)$ on $E$. However, this is impossible since $-4/v^2\ne 0$. Hence $\calC$ has no rational point.
\end{proof}

\begin{prop}\label{S4_example_factorization_prop}
Suppose that $c$ is of the form $c=(v^4+16)/(8v)$ for some rational number $v$. Then $P_c$ factors as
\[64v^2\cdot P_c=(8vx^3 - 8v^2x^2 + 4v^3x - v^4 - 16)(8vx^3 + 8v^2x^2 + 4v^3x + v^4 + 16).\]
Moreover, both cubic factors of $P_c$ are irreducible.
\end{prop}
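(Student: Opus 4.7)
The plan has two independent parts: verifying the displayed identity and establishing irreducibility of the two cubic factors.

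For the factorization, the most efficient approach is to exploit the sign symmetry between the two factors. Let $A = 8vx^3 + 4v^3x$ and $B = 8v^2x^2 + (v^4+16)$. Then the two cubics are exactly $A - B$ and $A + B$, so their product collapses to the difference of squares $A^2 - B^2$. Expanding,
\begin{align*}
A^2 &= 64v^2x^6 + 64v^4x^4 + 16v^6x^2,\\
B^2 &= 64v^4x^4 + 16v^2x^2(v^4+16) + (v^4+16)^2,
\end{align*}
the middle terms cancel and the $16v^6x^2$ contributions telescope, yielding
\[A^2 - B^2 = 64v^2x^6 - 256v^2x^2 - (v^4+16)^2.\]
From the defining equation $c = (v^4+16)/(8v)$ we read off $(v^4+16)^2 = 64v^2c^2$, so the right-hand side equals $64v^2(x^6 - 4x^2 - c^2) = 64v^2 P_c$, completing the verification.

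For the irreducibility claim, note that each factor, viewed as a polynomial in $\Q[x]$ for a fixed nonzero $v \in \Q$, is a genuine cubic since its leading coefficient is $8v \ne 0$. Any cubic in $\Q[x]$ is irreducible precisely when it has no root in $\Q$. Suppose, toward a contradiction, that the first factor $8vx_0^3 - 8v^2x_0^2 + 4v^3x_0 - v^4 - 16 = 0$ for some $x_0 \in \Q$. Then the pair $(v, x_0)$ is a rational point on the curve $\calC$ of Lemma \ref{S4_example_genus3_lem}, contradicting $\calC(\Q) = \emptyset$. The identical argument applied to $\calD$ rules out a rational root of the second factor. Hence both cubic factors are irreducible over $\Q$.

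The only potentially delicate step is the algebraic expansion in the first part, but the $A \pm B$ grouping reduces it to checking two small binomial squares, so no real obstacle arises. The substantive work has already been absorbed into Lemma \ref{S4_example_genus3_lem}, whose elliptic-curve reduction is what makes the irreducibility conclusion possible; once that lemma is available, the remainder of the proposition is essentially a bookkeeping exercise.
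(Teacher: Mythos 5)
Your proof is correct and follows essentially the same route as the paper: verify the polynomial identity (the paper simply substitutes and factors, while you organize the check as the difference of squares $(A-B)(A+B)=A^2-B^2$ with $A=8vx^3+4v^3x$ and $B=8v^2x^2+v^4+16$, which is a clean way to do the bookkeeping), and then deduce irreducibility of each cubic from the absence of rational points on $\calC$ and $\calD$ via Lemma \ref{S4_example_genus3_lem}. No gaps; note only that $v\ne 0$ is automatic since $c=(v^4+16)/(8v)$ must be defined, which justifies both the division by $64v^2$ and the nonvanishing of the leading coefficients.
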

\begin{proof}
Substituting $c=(v^4+16)/(8v)$ in the polynomial $P(c,x)$ and factoring, we obtain the above factorization. Lemma \ref{S4_example_genus3_lem} implies that neither factor of $P_c$ can have a rational root, and therefore both factors are irreducible.
\end{proof}

\subsection{An infinite family of exceptional Galois groups}\label{serre_section} In \cite[\S4.5]{serre_topics} Serre shows that for even values of $n$, the polynomial 
\[P_n(t,x)=(n-1)x^n - nx^{n-1}+1+(-1)^{n/2}(n-1)t^2\]
has the alternating group $A_n$ as its Galois group. By HIT, most specializations $P_n(c,x)$ will have Galois group $A_n$ as well. In the case $n=4$ we obtain the polynomial
\[P(t,x)=3x^4-4x^3+1+3t^2\]
with Galois group $A_4$. In this example we will determine precisely for which rational numbers $c$ the Galois group $G_c$ is different from $A_4$, and which groups $G_c$ arise for such numbers $c$. Our main results are Propositions \ref{serre_specializations_prop} and \ref{serre_ex_subgroups}.

\begin{lem}\label{serre_ex_F1_lem}
Let $F_1(t,x) = x^3 + 48x^2 + (336-1296t^2)x - 10368t^2 + 640$ and let $c\in\Q^{\ast}$. Then the polynomial $F_1(c,x)$ has a rational root if and only if $c$ has the form
\begin{equation}\label{serre_ex_eq}
c=\frac{v^3 - 9v}{9(1 - v^2)}
\end{equation}
for some rational number $v$.
\end{lem}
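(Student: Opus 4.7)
The plan is to rewrite the vanishing of $F_1(c,x)$ as the equation of a rational cubic curve and then parametrize its rational points using lines through its singular point. First I would separate out the $t^2$-dependent part of $F_1$:
\[
F_1(t,x) \; = \; (x^3 + 48x^2 + 336x + 640) \; - \; 1296(x+8)\,t^2.
\]
The cubic $x^3 + 48x^2 + 336x + 640$ factors as $(x+4)^2(x+40)$ (verified by noting that $-4$ is a double root), so $F_1(c,x)=0$ is equivalent to
\[
(x+4)^2(x+40) \; = \; 1296(x+8)\,c^2. \qquad (\ast)
\]
This defines a plane cubic $\calC \subset \A^2_{(c,x)}$ with a node at $(0,-4)$, hence a rational curve parametrized by lines through the node.

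For the ``if'' direction, assume $c = (v^3 - 9v)/(9(1-v^2))$ for some $v \in \Q$ (necessarily $v \neq \pm 1$). I would verify by direct substitution that $x = 8(v^2 - 5)/(v^2 - 1)$ is a rational root of $F_1(c,x)$, which settles this direction.

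For the ``only if'' direction, suppose $c \in \Q^{\ast}$ and $x \in \Q$ satisfy $F_1(c,x) = 0$. If $x = -4$, then $(\ast)$ reads $5184\,c^2 = 0$, contradicting $c \neq 0$; so $x \neq -4$. Define the rational nonzero slope $m := (x+4)/c$. Substituting $c = (x+4)/m$ into $(\ast)$ and cancelling $(x+4)^2$ yields $1296(x+8) = m^2(x+40)$, from which $x = 8(5m^2 - 1296)/(1296 - m^2)$ (the case $m^2 = 1296$ is immediately inconsistent with $(\ast)$). Back-substituting gives $c = (x+4)/m = 36(m^2 - 144)/(m(1296 - m^2))$, and the change of variables $v := 36/m \in \Q^{\ast}$ simplifies this to the target formula $c = (v^3 - 9v)/(9(1-v^2))$.

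The only delicate aspect is tracking the exceptional parameter values where denominators in the parametrization might vanish, but each such case either forces $c = 0$ or contradicts $(\ast)$ directly, so nothing is lost. The essential observation is the factorization $(x+4)^2(x+40)$, which exposes the nodal structure of $\calC$; once this is in hand, the rest is a routine birational calculation.
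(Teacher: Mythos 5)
Your approach is essentially the paper's: the paper also treats $F_1(t,x)=0$ as a rational plane cubic and writes down an explicit parametrization $\psi(z)=\bigl(\tfrac{z^3-9z}{9(1-z^2)},\tfrac{8(z^2-5)}{1-z^2}\bigr)$ with inverse $\phi$, then checks the exceptional points. Your derivation of the parametrization from the node at $(0,-4)$ via the factorization $(x+4)^2(x+40)-1296(x+8)t^2$ is a nice addition that explains where the paper's formulas come from, and your handling of the degenerate slopes ($x=-4$ forces $c=0$; $m^2=1296$ is inconsistent) is correct. I checked the ``only if'' computation: with $m=(x+4)/c$ one indeed gets $c=36(m^2-144)/\bigl(m(1296-m^2)\bigr)$, and $v=36/m$ turns this into $\tfrac{v^3-9v}{9(1-v^2)}$.

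There is, however, one concrete error: the root you propose to exhibit in the ``if'' direction has the wrong sign. The correct root is
\[
x=\frac{8(v^2-5)}{1-v^2},\qquad\text{not}\qquad \frac{8(v^2-5)}{v^2-1}.
\]
With your value the identity $(x+4)^2(x+40)=1296(x+8)c^2$ fails (e.g.\ for $v=2$ one gets $c=10/27$, and the root is $8/3$, not $-8/3$). With the sign corrected, one computes $x+4=\tfrac{-4(v^2-9)}{v^2-1}$, $x+40=\tfrac{32v^2}{v^2-1}$, $x+8=\tfrac{32}{v^2-1}$, and both sides of $(\ast)$ equal $\tfrac{512\,v^2(v^2-9)^2}{(v^2-1)^3}$. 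Since you explicitly defer this step to ``direct substitution,'' the verification as written would not go through; fix the sign and the proof is complete.
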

\begin{proof}
Let $C$ be the plane curve defined by the equation $F_1(t,x)=0$. The curve $C$ is parametrizable; indeed, the rational maps
\[\phi:C\dashedrightarrow\A^1=\Spec\Q[z]\;\;\text{and}\;\; \psi:\A^1\dashedrightarrow C\]
given by
\[\psi(z)=\left(\frac{z^3-9z}{9(1-z^2)},\frac{8(z^2-5)}{1-z^2}\right)\;\;\text{and}\;\;\phi(t,x)=\frac{x^2 - 1296t^2 + 44x + 160}{144t}\]
are easily seen to be inverses.

Suppose that $c$ is of the form \eqref{serre_ex_eq}. We may then define
\[r=\frac{8(v^2-5)}{1-v^2},\]
so that $\psi(v)=(c,r)$ is a rational point on $C$. Hence, the polynomial $F_1(c,x)$ has a rational root (namely $r$).

Conversely, suppose that $F_1(c,x)$ has a rational root, say $r$. Since $c\ne 0$, the map $\phi$ is defined at the point $(c,r)\in C(\Q)$. Thus, we may define $v=\phi(c,r)$. We claim that $v\ne\pm 1$. A straightforward calculation shows that the rational points on the pullback of $\pm 1$ under $\phi$ are $(0,-40)$ and $(0,-4)$. Since $c\ne 0$, the point $(c,r)$ is different from these two points. Hence $v=\phi(c,r)\ne\pm 1$, as claimed. The map $\psi$ is therefore defined at $v$, so $(c,r)=\psi(v)$. In particular, $c$ is of the form \eqref{serre_ex_eq}.
\end{proof}

\begin{lem}\label{serre_ex_F2_lem}
Let $F_2(t,x) = x^4 + 4x^3 + 81t^2 + 27$ and let $c\in\Q^{\ast}$. Then the polynomial $F_2(c,x)$ has no rational root.
\end{lem}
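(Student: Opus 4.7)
The plan is to rewrite the equation $F_2(c,x)=0$ as $(9c)^2 = -(x^4+4x^3+27)$ and then show the right-hand side cannot be a nonnegative square for the reasons below.

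First I would analyze the quartic $g(x):=x^4+4x^3+27$. A direct check shows $g(-3)=81-108+27=0$, and since $g'(x)=4x^3+12x^2=4x^2(x+3)$ also vanishes at $x=-3$, the value $x=-3$ is a double root of $g$. Performing the division, I expect to obtain the factorization
\[
g(x) = (x+3)^2\bigl(x^2-2x+3\bigr),
\]
which is easily verified by expansion; moreover $x^2-2x+3 = (x-1)^2+2 > 0$ for all real $x$.

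With this factorization in hand, the equation $F_2(c,x)=0$ becomes
\[
(9c)^2 = -(x+3)^2\bigl((x-1)^2+2\bigr).
\]
If $x=-3$, the right-hand side is $0$, forcing $c=0$, contrary to $c\in\Q^\ast$. If $x\ne -3$, then dividing by the nonzero square $(x+3)^2$ gives
\[
\left(\frac{9c}{x+3}\right)^2 = -\bigl((x-1)^2+2\bigr) \le -2,
\]
which is impossible over $\R$, hence over $\Q$. Either way we reach a contradiction, so $F_2(c,x)$ has no rational root.

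There is no real obstacle here: once the double root $x=-3$ of $g$ is spotted, the curve $y^2=-g(x)$ degenerates to a union of conics, and the residual conic $w^2+(x-1)^2=-2$ has no real points. The argument is elementary and does not require an elliptic curve rank computation, in contrast to the proof of Lemma \ref{serre_ex_F1_lem}.
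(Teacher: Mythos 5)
Your proof is correct and is essentially the paper's own argument: the paper likewise rules out $x=-3$ using $c\ne 0$, sets $y=9c/(x+3)$, and reduces the equation to $y^2+(x-1)^2+2=0$, which is the same identity your factorization $x^4+4x^3+27=(x+3)^2\bigl((x-1)^2+2\bigr)$ produces. No issues.
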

\begin{proof}
Suppose that $r\in\Q$ is such that $F_2(c,r)=0$. Since $c\ne 0$, we must have $r\ne-3$. Defining $y=9c/(r+3)$,
the equation $F_2(c,r)=0$ implies that
\[y^2+(r-1)^2+2=0,\]
which is clearly impossible for $y,r\in\Q$. This contradiction proves the lemma.
\end{proof}

\begin{prop}\label{serre_specializations_prop}
Let $c\in\Q$ and let $G_c$ be the Galois group of $P_c$. Then
\[G_c\not\cong A_4\iff c=\frac{v^3 - 9v}{9(1 - v^2)}\;\;\text{for some}\;v\in\Q.\]
\end{prop}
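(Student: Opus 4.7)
The strategy is to apply Algorithm \ref{hit_data_algorithm} to $P(t,x)=3x^4-4x^3+1+3t^2$ and then invoke Proposition \ref{algorithm_output_prop}, reducing the biconditional to the content of Lemmas \ref{serre_ex_F1_lem} and \ref{serre_ex_F2_lem}. Since $P$ has Galois group $A_4$, and $A_4$ has exactly two conjugacy classes of maximal subgroups (the Klein four-group $V_4$ of index $3$, and the Sylow $3$-subgroups of index $4$), the output set $S$ of the algorithm consists of two polynomials: a cubic $\widetilde F_1$ defining the fixed field of $V_4$ and a quartic $\widetilde F_2$ defining the fixed field of a $3$-cycle subgroup. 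First I would run the algorithm and verify that up to a change of primitive element these are precisely the polynomials $F_1$ and $F_2$ appearing in the two lemmas, and that the excluded set $D$ it produces is contained in $\{0\}$ (the leading coefficient $3$ contributes no rational roots of the discriminant-type expression appearing in \eqref{hit_excluded_set}).

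Granting this, Proposition \ref{algorithm_output_prop}(2) says that for every $c\in\Q\setminus D$, the group $G_c$ fails to be isomorphic to $A_4$ if and only if at least one of $F_1(c,x)$, $F_2(c,x)$ has a rational root. Lemma \ref{serre_ex_F2_lem} rules out the second possibility entirely for $c\ne 0$. Lemma \ref{serre_ex_F1_lem} characterizes the first possibility: $F_1(c,x)$ has a rational root exactly when $c=(v^3-9v)/(9(1-v^2))$ for some $v\in\Q$. Combining these gives the desired equivalence for all $c\in\Q\setminus D$ with $c\ne 0$.

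It remains only to verify the biconditional at $c=0$ directly, since $0$ is the one potentially problematic specialization. The polynomial $P_0=3x^4-4x^3+1$ has $x=1$ as a root, so $P_0$ is reducible and therefore $\calF(P_0)\ne\calF(P)$; it follows from Proposition \ref{algorithm_output_prop}(1) (or just from the fact that $A_4$ acts transitively on four letters while $G_0$ does not) that $G_0\not\cong A_4$. On the other side of the biconditional, the value $v=0$ gives $(v^3-9v)/(9(1-v^2))=0$, so the parametric representation is available. Both sides therefore hold at $c=0$, completing the argument. The main obstacle is the preliminary bookkeeping step: confirming that the output of Algorithm \ref{hit_data_algorithm} is exactly what is needed, i.e., that the excluded set $D$ is no worse than $\{0\}$ and that the two fixed-field polynomials produced by the algorithm agree (up to an affine substitution that preserves rational-root existence) with $F_1$ and $F_2$. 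Once this is verified, the geometric input from Lemmas \ref{serre_ex_F1_lem} and \ref{serre_ex_F2_lem} does all the remaining work.
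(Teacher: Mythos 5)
Your proposal follows the same route as the paper: run Algorithm~\ref{hit_data_algorithm} to get $D=\{0\}$ and the two fixed-field polynomials, apply Proposition~\ref{algorithm_output_prop}(2) for $c\ne 0$, and let Lemmas~\ref{serre_ex_F1_lem} and~\ref{serre_ex_F2_lem} do the rest; this is exactly the paper's argument. The one slip is your justification at $c=0$: you cannot cite Proposition~\ref{algorithm_output_prop}(1) there, since that proposition only applies to $c\in\Q\setminus D$ and $0\in D$; and the parenthetical fallback (``$A_4$ acts transitively while $G_0$ does not'') conflates non-isomorphism of permutation actions with non-isomorphism of abstract groups, which is what the statement asserts --- a reducible quartic can in principle have Galois group abstractly isomorphic to $A_4$. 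The correct (and trivial) fix, which is what the paper does, is to factor $P_0=(x-1)^2(3x^2+2x+1)$ and observe that its splitting field is quadratic, so $G_0$ has order $2$ and is certainly not $A_4$.
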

\begin{proof}
For $c=0$ the proposition holds because both statements in the above equivalence are true. Indeed, we have
\[P_0=3x^4-4x^3+1=(x-1)^2(3x^2+2x+1),\]
so $G_0$ has order 2. 

Suppose now that $c\ne 0$. Applying Algorithm \ref{hit_data_algorithm} to the polynomial $P$ we obtain the set $\{0\}$ and the polynomials
\begin{align*}
F_1(t,x) &= x^3 + 48x^2 + (336-1296t^2)x - 10368t^2 + 640,\\
F_2(t,x) &= x^4 + 4x^3 + 81t^2 + 27.
\end{align*}

By Proposition \ref{algorithm_output_prop} and Lemmas \ref{serre_ex_F1_lem} and \ref{serre_ex_F2_lem}, we have the following:
\begin{align*}
G_c\not\cong A_4 &\iff F_1(c,x)\cdot F_2(c,x)\;\text{has a rational root}\\
&\iff F_1(c,x)\;\text{has a rational root}\\
&\iff c=\frac{v^3 - 9v}{9(1 - v^2)}\;\;\text{for some}\;v\in\Q.\qedhere
\end{align*}
\end{proof}

Continuing with this example, we turn now to the question of which subgroups of $A_4$ arise as groups $G_c$ for some $c\in\Q$. Let $N$ be a splitting field for $P$ over $\Q(t)$. Let $G=\Gal(N/\Q(t))$ be the Galois group of $P$, and fix an isomorphism $G\cong A_4$. Since
\[\disc P=2^8\cdot3^4\cdot t^2(3t^2+1)^2,\]
Proposition \ref{decomposition_group_prop} implies that for $c\ne 0$, the group $G_c$ is isomorphic to a subgroup of $A_4$. The same holds true for $c=0$ since $G_0$ has order 2 and $A_4$ has a subgroup of order 2. Proposition \ref{serre_specializations_prop} thus leads naturally to the following question: when $G_c$ is not isomorphic to $A_4$, which subgroup of $A_4$ is it isomorphic to? The methods of \S\S\ref{hit_section}-\ref{algorithm_section} provide a way to answer this.

Up to conjugacy, $A_4$ has exactly three nontrivial proper subgroups:
\[A=\langle(1,2)(3,4), (1,3)(2,4)\rangle,\; B=\langle(1,2,3)\rangle,\; \text{and} \;C=\langle(1,2)(3,4)\rangle.\]
We will henceforth identify $A$, $B$, and $C$ with the subgroups of $G$ that they correspond to under the isomorphism $G\cong A_4$.

Let $F_A, F_B$, and $F_C$ be the fixed fields of $A, B$, and $C$, respectively. Using \textsc{Magma} we find that $F_A$ and $F_B$ are generated over $\Q(t)$ by the polynomials $F_1(t,x)$ and $F_2(t,x)$ defined in Lemmas \ref{serre_ex_F1_lem} and \ref{serre_ex_F2_lem}. For $F_C$ we obtain the polynomial
\begin{multline*}
F_3(t,x)=x^6 + 12x^5 + 48x^4 + 64x^3 \\ 
- (324t^2 + 108)x^2 - (1296t^2 + 432)x - 1296t^2 - 432.
\end{multline*}

\begin{lem}\label{serre_ex_nonsubgroups}
Let $c\in\Q^{\ast}$, and let $\P$ be a prime of $N$ lying over the prime $(t-c)\subset \Q[t]$. Then $G_{\P}\not\subseteq B$.
\end{lem}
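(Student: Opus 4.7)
The plan is to derive a contradiction from the assumption $G_\P \subseteq B$ by applying Proposition \ref{decomposition_field_prop} to the intermediate field $F_B$. Since $B = \Gal(N/F_B)$, an inclusion $G_\P \subseteq B$ is precisely clause (4) of that proposition with $(F,f) = (F_B, F_2)$, and hence would force $F_2(c,x)$ to have a root in $\Q$---in direct contradiction with Lemma \ref{serre_ex_F2_lem}.

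Before invoking Proposition \ref{decomposition_field_prop} I must check its hypothesis
\[\Delta(c) \cdot \ell(c) \cdot \disc F_2(c,x) \ne 0.\]
The leading coefficient of $P$ is $\ell(t) = 3$, so $\ell(c) \ne 0$ automatically. From the factorization $\disc P = 2^8 \cdot 3^4 \cdot t^2(3t^2+1)^2$ recorded above, $\Delta(c)$ vanishes only at $c = 0$, since $3c^2 + 1$ has no rational zeros; hence $\Delta(c) \ne 0$ for every $c \in \Q^\ast$. For the third factor, I would use that $F_2'(t,x) = 4x^2(x+3)$ and apply the resultant formula to obtain $\disc F_2 = \operatorname{Res}(F_2,F_2') = 4^4 \cdot F_2(t,0)^2 \cdot F_2(t,-3)$, which works out to a nonzero constant multiple of $t^2(3t^2+1)^2$ and therefore vanishes only at $t = 0$.

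With the hypothesis in hand, Proposition \ref{decomposition_field_prop} gives the implication ``$G_\P \subseteq B \Rightarrow F_2(c,x)$ has a rational root,'' which together with Lemma \ref{serre_ex_F2_lem} produces the desired contradiction. The only technical point is the calculation of $\disc F_2$, and this is trivialized by the simple factorization $F_2'(t,x) = 4x^2(x+3)$; I anticipate no serious obstacle. An equally short alternative, if one prefers to avoid the discriminant computation, is to pass through Proposition \ref{decomposition_group_prop}: since $B$ stabilizes one of the four points on which $G \cong A_4$ acts, an inclusion $G_\P \subseteq B$ would force $P_c$ to have a rational root, whereas the identity $P_c = (x-1)^2 \cdot (3x^2 + 2x + 1) + 3c^2$ shows directly that no such root exists for $c \in \Q^\ast$ (as $3x^2 + 2x + 1 > 0$ on $\R$).
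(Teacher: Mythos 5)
Your main argument is exactly the paper's proof: assume $G_{\P}\subseteq B$, verify $\disc F_2 = 2^8\cdot 3^{10}\, t^2(3t^2+1)^2$ so that $\disc F_2(c,x)\ne 0$ for $c\ne 0$, invoke Proposition \ref{decomposition_field_prop} for the field $F_B$ to conclude $F_2(c,x)$ has a rational root, and contradict Lemma \ref{serre_ex_F2_lem}; your resultant computation of the discriminant is correct and matches the value the paper states. Your alternative route through Proposition \ref{decomposition_group_prop} and the positivity of $P_c$ on $\R$ for $c\ne 0$ is also sound, though the paper does not use it.
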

\begin{proof}
Suppose, by contradiction, that $G_{\P}\subseteq B$. We have
\[\disc F_2=2^8\cdot 3^{10}\cdot t^2(3t^2+1)^2,\]
so $\disc F_2(c,x)\ne 0$. By Proposition \ref{decomposition_field_prop} applied to the field $F_B$, the polynomial $F_2(c,x)$ has a rational root. However, this contradicts Lemma \ref{serre_ex_F2_lem}.
\end{proof}

It follows from the above lemma (and Proposition \ref{decomposition_group_prop}) that if $c\in\Q$ is such that $G_c\not\cong A_4$, then $G_c$ must be isomorphic to either $A$ or $C$. We now determine precisely when each case occurs.

\begin{lem}\label{serre_ex_F3_lem}
Let $v\in\Q\setminus\{\pm1\}$ and let $c=(v^3-9v)/(9(1-v^2))$.
Then the polynomial $F_3(c,x)$ has a rational root if and only if $v$ has one of the forms
\[1+2w^2,\;-1-2w^2,\; or\; \frac{2w}{1+w^2}\]
for some $w\in\Q$.
\end{lem}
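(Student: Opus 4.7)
My plan is to exploit a structural observation about $F_3$: as a polynomial in $x$, it is actually a cubic in $Y = (x+2)^2$. Direct expansion gives
\[F_3(t,x) = ((x+2)^2 - 4)^3 - (324t^2 + 108)(x+2)^2,\]
so $F_3(c,x)$ has a rational root in $x$ if and only if the cubic $H_c(Y) = (Y-4)^3 - (324c^2+108)Y$ has a rational root $Y_0$ that is also a rational square; any such $Y_0$ then yields $x = -2 \pm \sqrt{Y_0} \in \Q$.

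The central technical step will be to factor $H_c(Y)$ completely over $\Q(v)$. First I would verify the algebraic identity $1 + 3c^2 = (v^2+3)^3/(27(1-v^2)^2)$, which reduces to checking that $27(1-v^2)^2 + v^2(v^2-9)^2 = (v^2+3)^3$; this lets us rewrite
\[H_c(Y) = (Y-4)^3 - \frac{4(v^2+3)^3}{(1-v^2)^2}\,Y.\]
Substituting $S = Y - 4$ gives the depressed cubic $S^3 - KS - 4K = 0$ with $K = 4(v^2+3)^3/(1-v^2)^2$. I would then test the ansatz $S = 4(v^2+3)/(1-v^2)$, suggested by the shape of $K$; one checks directly that this is a root. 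Dividing out the corresponding linear factor leaves a quadratic whose discriminant simplifies to $16v^2(v^2+3)^2$, a perfect square, so the quadratic splits too. The complete factorization is $H_c(Y) = (Y - Y_1)(Y - Y_2)(Y - Y_3)$ with
\[Y_1 = \frac{16}{1-v^2}, \qquad Y_2 = \frac{-2(v-1)^2}{v+1}, \qquad Y_3 = \frac{2(v+1)^2}{v-1}.\]

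The last step is to determine when each $Y_i$ is a rational square. For $Y_3$: since $(v+1)^2$ is already a square, $Y_3$ is a rational square exactly when $(v-1)/2$ is, equivalently $v = 1 + 2w^2$ for some $w \in \Q$. Analogously, $Y_2$ being a rational square corresponds to $-(v+1)/2$ being a square, i.e., $v = -1 - 2w^2$. For $Y_1 = 16/(1-v^2)$, the condition is that $1-v^2$ is a rational square, and the standard parametrization of the rational points on the conic $v^2 + s^2 = 1$ yields precisely $v = 2w/(1+w^2)$ for some $w \in \Q$. Taking the union of the three characterizations produces the three families in the statement.

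The main obstacle is discovering the rational root $Y_1 = 16/(1-v^2)$ of the cubic. I would find it by specializing to a convenient value of $v$ (for instance $v = 2$ or $v = 9$), solving the resulting integer cubic, and recognizing the pattern $Y_1 = 16/(1-v^2)$; once $Y_1$ is in hand, the remaining factorization and the case analysis for squareness are routine algebra.
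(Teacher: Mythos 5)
Your proof is correct, and at bottom it rests on the same factorization as the paper's, reached by a different and more self-contained route. The paper substitutes $c=(v^3-9v)/(9(1-v^2))$ and factors $(v^2-1)^2\cdot F_3(c,x)$ directly into three quadratics $f,g,h$ in $x$ (in practice produced by computer algebra), then tests whether each discriminant is a square. Your substitution $Y=(x+2)^2$ turns $F_3$ into the cubic $H_c(Y)$, and your three roots $Y_1,Y_2,Y_3$ recover exactly those quadratics: completing the square gives $h(v,x)=(v^2-1)\bigl((x+2)^2-Y_1\bigr)$, $g(v,x)=(v+1)\bigl((x+2)^2-Y_2\bigr)$, and $f(v,x)=(v-1)\bigl((x+2)^2-Y_3\bigr)$, and ``$Y_i$ is a rational square'' is precisely the condition that the corresponding quadratic have a rational root, i.e.\ the paper's discriminant condition (e.g.\ $\disc f=8(v-1)(v+1)^2$ is a square iff $Y_3=2(v+1)^2/(v-1)$ is). What your route buys is that the factorization is \emph{discovered and verified by hand}: the identity $27(1-v^2)^2+v^2(v^2-9)^2=(v^2+3)^3$ and the explicit root $S=4(v^2+3)/(1-v^2)$ are one-line checks, whereas the paper's factorization must simply be recomputed or taken on faith. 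All your auxiliary claims check out (the discriminant of the residual quadratic is $16v^2(v^2+3)^2/(1-v^2)^2$, still a square, and $Y_1,Y_2,Y_3$ are as stated), and the hypothesis $v\ne\pm1$ is exactly what keeps the denominators finite and the squareness equivalences (e.g.\ $Y_3$ a square iff $(v-1)/2$ is a nonzero square iff $v=1+2w^2$) valid.
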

\begin{proof}
Substituting $c=(v^3-9v)/(9(1-v^2))$ in the polynomial $F_3(c,x)$ we find that there is a factorization
\[(v^2-1)^2\cdot F_3(c,x)=f(v,x)\cdot g(v,x)\cdot h(v,x),\]
where $f,g,h\in\Q[t,x]$ are defined by
\begin{align*}
f(t,x) &=(t - 1)x^2 + (4t - 4)x - 2t^2 - 6,\\
g(t,x) &=(t + 1)x^2 + (4t + 4)x + 2t^2 + 6,\\
h(t,x) &=(t^2 - 1)x^2 + (4t^2 - 4)x + 4t^2 + 12.
\end{align*}
It follows that $F_3(c,x)$ has a rational root if and only if at least one of the discriminants of $f(v,x)$, $g(v,x)$, or $h(v,x)$ is a square. Now, it is a straightforward calculation to verify that
\[\disc f(v,x)=8(v - 1)(v + 1)^2\]
is a square if and only if $v=1+2w^2$ for some $w\in\Q$; that
\[\disc g(v,x)=-8(v + 1)(v - 1)^2\] is a square if and only if $v=-1-2w^2$ for some $w\in\Q$; and that
\[\disc h(v,x)=-64(v^2 - 1)\] is a square if and only if $v=(2w)/(1+w^2)$ for some $w\in\Q$. Therefore, $F_3(c,x)$ has a rational root if and only if $v$ has one of these forms.
\end{proof}
We can now give a complete characterization of the groups $G_c$ that are not isomorphic to $A_4$. 
\begin{prop}\label{serre_ex_subgroups}
Suppose that $c\in\Q$ satisfies $G_c\not\cong A_4$, so that
\[c=\frac{v^3 - 9v}{9(1 - v^2)}\;\;\text{for some}\;v\in\Q.\]
If $v$ has one of the forms
\begin{equation}\label{serre_ex_v_forms}
1+2w^2,\;-1-2w^2,\; or\; \frac{2w}{1+w^2},
\end{equation}
then $G_c\cong C$. Otherwise $G_c\cong A$.
\end{prop}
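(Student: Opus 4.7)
The plan is to identify $G_c$ with the decomposition group $G_{\P}$ at a prime $\P$ of $N$ above $\p_c$, pin down $G_{\P}$ as a subgroup of $G\cong A_4$ by ruling out impossibilities, and distinguish the two surviving candidates via Proposition \ref{decomposition_field_prop} applied to $F_C$ with defining polynomial $F_3$. The degenerate case $c=0$ is handled directly: $P_0=(x-1)^2(3x^2+2x+1)$ has splitting field $\Q(\sqrt{-2})$, so $G_0\cong\Z/2\Z\cong C$; each of the three preimages $v\in\{0,\pm 3\}$ of $c=0$ under the parametrization is visibly of one of the three displayed forms (take $w=0$ or $w=\pm 1$), so the conclusion is consistent.

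For $c\ne 0$, one has $\Delta(c)\cdot\ell(c)\ne 0$, so Proposition \ref{decomposition_group_prop} supplies an isomorphism of group actions $G_c\cong G_{\P}$, realising $G_c$ as a subgroup of $G\cong A_4$. First I would show $G_{\P}\subseteq A$: Lemma \ref{serre_ex_nonsubgroups} rules out $G_{\P}\subseteq B$ for any prime $\P$ over $\p_c$, and because the primes over $\p_c$ are $G$-conjugate while all Sylow-3 subgroups of $A_4$ are conjugate to $B$, this excludes $G_{\P}$ from every Sylow-3 subgroup. Hence $G_{\P}$ contains no 3-cycle, and the classification of elements of $A_4$ as the identity, 3-cycles, and double transpositions forces $G_{\P}\subseteq A$. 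Next I would eliminate $G_{\P}=\{1\}$ by a short real-analysis argument: $P_c'(x)=12x^2(x-1)$ has critical points $0$ and $1$ at which $P_c$ takes the positive values $1+3c^2$ and $3c^2$, so $P_c>0$ on $\R$ and therefore has no rational root; hence $G_c$ is nontrivial. Consequently $G_{\P}$ is either a conjugate of $C$ or all of $A$.

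To decide between these, I would apply Proposition \ref{decomposition_field_prop} to $F=F_C$ with defining polynomial $F_3$: provided $\disc F_3(c,x)\ne 0$, the polynomial $F_3(c,x)$ has a rational root if and only if some $\P$ over $\p_c$ satisfies $G_{\P}\subseteq C$. Since $G_{\P}$ is a nontrivial subgroup of $A$, and its common conjugacy class contains a conjugate of $C$ precisely when $|G_{\P}|=2$, this condition translates into the dichotomy $G_c\cong C$ versus $G_c\cong A$. Combining with Lemma \ref{serre_ex_F3_lem}, which equates ``$F_3(c,x)$ has a rational root'' with ``$v$ has one of the three displayed forms'', yields the stated characterization. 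The main obstacle is bookkeeping rather than substance: carefully tracking how Lemma \ref{serre_ex_nonsubgroups} combined with conjugacy of primes excludes \emph{every} Sylow-3 subgroup (not only $B$), and verifying the auxiliary hypothesis $\disc F_3(c,x)\ne 0$, which rules out only finitely many exceptional $c$ that can be inspected by hand if needed.
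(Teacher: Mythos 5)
Your proof is correct and its backbone coincides with the paper's: both arguments reduce the dichotomy to whether $F_3(c,x)$ has a rational root (Proposition \ref{decomposition_field_prop} applied to $F_C$, combined with Lemma \ref{serre_ex_F3_lem}), and both use Lemma \ref{serre_ex_nonsubgroups} to keep $G_{\P}$ away from the order-$3$ subgroups. The differences are in the supporting steps, and they mostly work in your favor. Where the paper invokes Lemma \ref{serre_ex_F1_lem} to place $G_{\P}\subseteq A$ for some prime, you deduce $G_{\P}\subseteq A$ for every prime purely group-theoretically (conjugacy of the primes over $\p_c$ plus conjugacy of the Sylow $3$-subgroups excludes all $3$-cycles from the proper subgroup $G_{\P}$, leaving only double transpositions); this also makes explicit the conjugacy bookkeeping that the paper compresses into ``Hence $G_{\P}=A$'' --- a priori $G_{\P}$ could be one of the other two order-$2$ subgroups of $A$, and one must observe that these are $A_4$-conjugate to $C$ so that Proposition \ref{decomposition_field_prop}'s existential quantifier over primes would then force $F_3(c,x)$ to have a rational root. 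Your real-analysis argument that $P_c>0$ on $\R$ is a valid but unnecessary detour: the trivial group is contained in $B$, so Lemma \ref{serre_ex_nonsubgroups} already gives nontriviality, which is how the paper argues. The only item you leave unchecked, $\disc F_3(c,x)\ne 0$, is harmless: one computes $\disc F_3(c,x)=2^{28}\cdot 3^{20}\cdot c^4(3c^2+1)^4$, which is nonzero for all $c\ne 0$, so there are no exceptional values to inspect. Your treatment of $c=0$ is in fact more complete than the paper's, since you verify that all three preimages $v\in\{0,\pm3\}$ of $c=0$ fall under the listed forms, so the stated dichotomy is unambiguous there.
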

\begin{proof}
For $c=0$ the result is easily verified; thus, we assume that $c\ne 0$. We then have $\disc F_3(c,x)=2^{28}\cdot 3^{20}\cdot c^4(3c^2+1)^4\ne 0$.

Suppose that $v$ has one of the forms \eqref{serre_ex_v_forms}. By Lemma \ref{serre_ex_F3_lem}, the polynomial $F_3(c,x)$ has a rational root. By Proposition \ref{decomposition_field_prop} this implies that there exists a prime $\P$ of $N$ lying over $(t-c)$ such that $G_{\P}\subseteq C$. The group $G_{\P}$ is nontrivial by Lemma \ref{serre_ex_nonsubgroups}, so $G_{\P}=C$. Hence, by Proposition \ref{decomposition_group_prop}, $G_c$ is isomorphic to $C$.

Suppose now that $v$ cannot be written in any of the forms \eqref{serre_ex_v_forms}. By Lemma \ref{serre_ex_F3_lem} and Lemma \ref{serre_ex_F1_lem}, the polynomial $F_3(c,x)$ does not have a rational root but $F_1(c,x)$ does. Hence, by Proposition \ref{decomposition_field_prop}, there exists a prime $\P$ of $N$ lying over $(t-c)$ such that $G_{\P}\subseteq A$ but $G_{\P}\not\subseteq C$. Hence $G_{\P}=A$ and, by Proposition \ref{decomposition_group_prop}, $G_c\cong A$.
\end{proof}

\subsection{A reducible polynomial}\label{reducible_example_section} We end \S\ref{examples_section} by mentioning two sample computations with polynomials $P(t,x)$ of larger degrees. Let $\phi(x)=x^2+t\in\Q(t)[x]$ and let $n$ be a positive integer. The \emph{$n$-th dynatomic polynomial} of $\phi$ is defined by the equation
\[\Phi_n(t,x)=\prod_{d|n}\left(\phi^d(x)-x\right)^{\mu(n/d)},\]
where $\mu$ is the M\"{o}bius function and $\phi^d$ denotes the $d$-fold composition of $\phi$ with itself. The significance of the polynomial $\Phi_n$ is that its roots are precisely the elements of $\overline{\Q(t)}$ having period $n$ under iteration of $\phi(x)$. It is an important unsolved problem in the field of arithmetic dynamics to determine the exceptional set of $\Phi_n$ for every $n$.

\bigskip
Consider the problem of determining the exceptional set of the polynomial $\Phi_4$, which has degree 12. Using Algorithm~\ref{hit_data_algorithm} we find that every rational number $c$ of the form $c = (4-3v-v^3)/4 v$ with $v\in\Q^\ast$ lies in $\calE(\Phi_4)$. To determine what the Galois group of the specialized polynomial $\Phi_4(c,x)$ is when $c$ has this form, we compute the Galois group, $H$,
of the polynomial
\[P(v, x):= \Phi_4((4-3v-v^3)/4 v, x)\in\Q(v)[x],\]
which is a product of a degree-8 and a degree-4 polynomial. An application of Algorithm~\ref{hit_data_algorithm} to $P(v,x)$ reveals that when $v$ does not have
the form $v=(1+4s^3-s^6)/(4s^2(s^2-1))$, the Galois group of $P(v,x)$ is equal to $H$, and when $v$ does have this form, the Galois group is a proper subgroup of $H$. Note that these calculations require the
ability to compute Galois groups of reducible polynomials over function fields.

\subsection{A polynomial of degree 30}\label{large_example_section}
Consider now the polynomial $P=\Phi_5$. The Galois group $G$ in this case is the wreath product $(\Z/5\Z)\wr S_6$, which has order 11,250,000. While the computation of $G$ and its maximal subgroups did not present any problems, we were unable to fully carry out Algorithm~\ref{hit_data_algorithm} for this polynomial. The unique obstacle was the computation of the fixed field of one maximal subgroup of $G$ that has index 3125. For the other maximal subgroups (all of which have index at most 15) we did successfully compute the corresponding fixed field.

\section{An application to arithmetic dynamics}\label{dynamics_section}

In \cite{manes} Manes studies the possible periods of rational numbers under iteration of maps of the form
\[\phi_{k,b}(z)=kz+b/z,\;\;\text{where}\;\;k\in\Q\setminus\{0,-1/2\}\;\;\text{and}\;\;b\in\Q^{\ast}.\]
Manes conjectures that no such map can have a rational point of period greater than four, and proves the following finiteness result for points of period five: there exists a finite set $S\subset\Q$ such that if $k\notin S$ then, for any $b\in\Q^{\ast}$, the map $\phi_{k,b}$ has no point of period five in $\Q$. Our goal in this section is to prove a stronger statement.

\begin{prop}\label{manes_specializations_prop}
There exists a finite set $S\subset\Q$ such that if $k\in\Q\setminus S$, then the following holds for every $b\in\Q^{\ast}$: for at least one third of all prime numbers $p$ (in the sense of Dirichlet density), the map $\phi_{k,b}$ has no point of period five in $\Q_p$.
\end{prop}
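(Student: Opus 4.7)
The plan is to reduce to a Chebotarev density argument for the splitting field of the fifth dynatomic polynomial $\Phi_5(z;k,b)\in\Q[z]$ of $\phi_{k,b}$, and the technical core will be controlling its Galois group uniformly in $b$ via an explicit Hilbert irreducibility analysis of a low-degree resolvent. First I would compute the Galois group $G$ of the generic polynomial $\Phi_5(z;t,u)\in\Q(t,u)[z]$ using the algorithms of Section~\ref{algorithm_section}, and exploit the commuting involutions $\phi_{t,u}$ and $z\mapsto -z$: since $\phi_{t,u}(-z)=-\phi_{t,u}(z)$, the $30$ primitive period-$5$ points organise into six $\phi_{t,u}$-orbits of size $5$ which pair up into three negation-pairs of size $10$. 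Let $g(X;t,u)\in\Q[t,u][X]$ denote the cubic resolvent whose roots are the three block-invariants (for instance $s_i=\tfrac{1}{2}\sum_{z\in\mathrm{pair}_i}z^{2}$). The key group-theoretic point is that any element of $G$ whose image in $S_{3}$ is a $3$-cycle cyclically permutes the three blocks of ten roots and so acts without fixed point on the $30$ period-$5$ points; hence every subgroup $H\le G$ whose image in $S_{3}$ contains $A_{3}$ has at least $1/3$ of its elements fixed-point-free.

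Next I would apply Theorem~\ref{main_hit_thm} to $g(X;t,u)$, specialising $t\mapsto k$, in order to pin down a finite set $S\subset\Q$ such that for every $k\in\Q\setminus S$ and every $b\in\Q^{\ast}$, the cubic $g(X;k,b)\in\Q[X]$ is irreducible, equivalently has Galois group containing $A_{3}$. Because the proposition demands the conclusion for \emph{every} $b$ rather than outside a thin set, a direct application of HIT is not enough: for each exceptional curve in the $(b,X)$-plane produced by Algorithm~\ref{hit_data_algorithm} I would analyse when its $\Q$-points can force a rational root of $g(X;k,b)$. Curves of genus $\ge 2$ contribute only finitely many $(k,b)$ by Faltings, while the genus-$0$ and genus-$1$ components can be treated in the spirit of the case analyses of Sections~\ref{fermat_section}--\ref{serre_section}; the upshot is that the set of $k\in\Q$ for which some $b\in\Q^{\ast}$ yields a rational root of $g(X;k,b)$ is finite. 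Together with the roots of the discriminant and leading-coefficient polynomials appearing in~\eqref{hit_excluded_set}, this produces $S$.

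Finally, for $k\in\Q\setminus S$ and $b\in\Q^{\ast}$ I would invoke Chebotarev's density theorem applied to the splitting field of $\Phi_5(z;k,b)$ over $\Q$. By the group-theoretic observation above, at least $1/3$ of the elements of $H_{k,b}:=\Gal(\Phi_5(z;k,b)/\Q)$ are fixed-point-free on the $30$ roots, so the Dirichlet density of rational primes $p$ for which $\mathrm{Frob}_p$ is fixed-point-free is at least $1/3$. For such primes of good reduction, Hensel's lemma gives that $\Phi_5(z;k,b)$ has no root in $\Q_p$, so $\phi_{k,b}$ has no point of period $5$ in $\Q_p$. The main obstacle is isolating $S$: controlling the resolvent for every $b$ requires an explicit rational-points analysis of the auxiliary curves and surfaces produced from $g$ by HIT, which goes beyond a single application of Theorem~\ref{main_hit_thm} and is where most of the work—patterned on the case studies of Section~\ref{examples_section}—would lie.
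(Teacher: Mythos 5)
Your overall architecture (Chebotarev for a cubic resolvent of the $30$ period-$5$ points, plus the observation that elements mapping to $3$-cycles on the three negation-paired blocks are fixed-point-free, giving density at least $1/3$) is sound and is in fact what underlies the paper's argument. But there is a genuine gap at the step you yourself flag as ``where most of the work would lie'': your resolvent $g(X;t,u)$ depends on \emph{both} parameters, so to get a conclusion valid for \emph{every} $b\in\Q^{\ast}$ you would need a two-parameter uniformity statement --- finiteness of the set of $k$ for which \emph{some} $b$ makes $g(X;k,b)$ reducible. Theorem~\ref{main_hit_thm} is a one-parameter result; applied with $u=b$ as the parameter it only controls $b$ outside a thin set for each fixed $k$, and applied over $\Q(t,u)$ it produces auxiliary \emph{surfaces} whose rational points you cannot determine by the curve techniques of \S\ref{examples_section}. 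As written, the finiteness of $S$ is asserted, not proved.

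The missing idea is that for this particular family the resolvent can be taken independent of $b$. The paper invokes Manes's Proposition~3: the scaling/negation symmetries of $\phi_{k,b}$ force suitable block invariants of the three orbit-pairs to satisfy an explicit cubic $P(k,x)=0$ whose coefficients involve only $k$, so that a period-$5$ point of $\phi_{k,b}$ in \emph{any} characteristic-$0$ field $F$ forces $P(k,x)$ to have a root in $F$ (Lemma~\ref{manes_equation}), with $b$ disappearing entirely. This collapses the problem to a one-parameter family: $P(t,x)$ is irreducible over $\Q(t)$ with group $S_3$, its two Hilbert curves ($P(t,x)=0$ and $x^2=f(t)$) have genus $4$ and $5$, so Faltings gives $\#\calE(P)<\infty$; then for $k\notin\calE(P)\cup\{0,-1/2\}$ Chebotarev applied to the cubic $P(k,x)$ with group $S_3$ gives density $2/3$ for primes where it has a $\Q_p$-root, hence density $1/3$ for primes where $\phi_{k,b}$ has no period-$5$ point in $\Q_p$, for every $b$. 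To repair your argument you would need to either import Manes's $b$-independent resolvent or prove yourself that the splitting field of your $g(X;k,b)$ over $\Q$ is independent of $b$ (e.g.\ by choosing invariants unchanged under the conjugation $z\mapsto\lambda z$, which replaces $b$ by $b/\lambda^2$); without that, the ``for every $b$'' quantifier is not achieved.
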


To prove the above proposition we will need to determine the exceptional set of polynomial $P\in\Q[t,x]$ given by
\[P(t,x)=t^4x^3+a(t)x^2+b(t)x+c(t),\]
where
\begin{align*}
a(t) &= 11t^6 + 11t^5 + 11t^4 + 7t^3 + 5t^2 + 2t + 1,\\
b(t) &= 19t^8 + 38t^7 + 57t^6 + 60t^5 + 55t^4 + 38t^3 + 24t^2 + 9t + 3,\\
c(t) &= (3t^3 + 3t^2 + 3t + 1)^2(t^4 + t^3 + t^2 + t + 1).
\end{align*}

The following lemma explains the precise relation between the polynomial $P$ and points of period five for maps of the form $\phi_{k,b}$.

\begin{lem}\label{manes_equation}
Let $F$ be a field of characteristic 0, and let $k\in F\setminus\{0,-1/2\}$ and $b\in F^{\ast}$. Suppose that the map $\phi_{k,b}(z)$ has a point of period five in $F$. Then the polynomial $P(k,x)$ has a root in $F$.
\end{lem}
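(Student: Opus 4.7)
The plan is to exhibit, from a given period-five orbit of $\phi_{k,b}$ in $F$, an explicit element $x\in F$ and then to verify $P(k,x)=0$ by a direct symbolic computation, working formally in a polynomial ring and specializing to $F$ at the end.

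First, I would set up the orbit. Let $z\in F$ be a point of period exactly five, and let $z_0,z_1,z_2,z_3,z_4$ denote its orbit under $\phi_{k,b}$, so that $z_{i+1}=\phi_{k,b}(z_i)=kz_i+b/z_i$ with indices taken modulo~$5$ (in particular $z_5=z_0$). Since $\phi_{k,b}$ is undefined at the origin, every $z_i$ lies in $F^{\ast}$; and since the period is exactly five the $z_i$ are pairwise distinct. Clearing denominators yields the five algebraic relations
$$z_i z_{i+1}-k z_i^2-b=0,\qquad i=0,1,2,3,4,$$
which generate an ideal $I\subset\Q[k,b,z_0,\ldots,z_4]$ encoding every formal consequence of the hypothesis. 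The variable $b$ can be eliminated using any one of these relations.

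Second, I would construct a polynomial $x=x(z_0,\ldots,z_4,k)\in\Q[z_0,\ldots,z_4,k]$ with two symmetry properties: invariance under the cyclic shift $z_i\mapsto z_{i+1}$, so that $x\in F$ depends only on the orbit and not on the starting point, and invariance under the sign flip $z_i\mapsto -z_i$, which reflects the identity $\phi_{k,b}(-z)=-\phi_{k,b}(z)$ pairing the orbit of $z$ with the orbit of $-z$. The $5$th dynatomic polynomial of $\phi_{k,b}$ has degree $30$ in $z$, with six generic orbits of length five; the sign-flip involution groups these into three pairs, which matches the fact that $P$ is cubic in $x$. A natural candidate is a low-degree even cyclic invariant, for example $x=\sum_{i=0}^{4}z_i z_{i+2}$; the precise choice is pinned down by requiring that the resulting cubic relation over $\Q(k)$ coincide with $P(k,x)$.

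Third, I would verify the identity $P(k,x)\equiv 0\pmod{I}$ by a Gr\"obner basis or iterated resultant calculation in $\Q[k,b,z_0,\ldots,z_4]$. Since this is a universal polynomial identity over $\Q$, specialization to $F$ is automatic and the conclusion follows.

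The principal obstacle lies in step two: producing the correct invariant $x$. A systematic route is to form the degree-six resolvent over $\Q(k,b)$ whose roots are the orbit values of a chosen cyclic invariant, then pass to the quotient by the involution induced by $z\mapsto -z$ to obtain a cubic in $x$ over $\Q(k)$, and finally match its coefficients against those of $P$. Once the explicit formula for $x$ has been identified, the verification in step three is entirely mechanical, and the rest of the argument amounts to unpacking the definition of a period-five point.
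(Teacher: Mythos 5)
There is a genuine gap here: your argument is a plan for a proof rather than a proof. The entire content of the lemma is the existence of a specific element $x\in F$, expressible in terms of the period-five orbit, satisfying the \emph{particular} cubic $P(k,x)=0$ with the coefficients $a(t),b(t),c(t)$ given in the paper -- and that is exactly the step you leave open. You write that ``the precise choice is pinned down by requiring that the resulting cubic relation over $\Q(k)$ coincide with $P(k,x)$,'' but this is the conclusion you are trying to establish, not a way of producing $x$. Your counting heuristic (six generic length-five orbits of the degree-$30$ dynatomic polynomial pairing into three under $z\mapsto -z$, matching a cubic in $x$) makes the claim plausible, but it does not show that the minimal polynomial of your candidate invariant over $\Q(k)$ is $P(k,x)$ rather than some other cubic, nor even that the correct invariant lives in $\Q[z_0,\ldots,z_4,k]$ of the low degree you guess. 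Until the explicit formula for $x$ is written down and the identity $P(k,x)\equiv 0$ is verified modulo the orbit ideal $I$, the lemma is unproved. (The preliminary points -- that every $z_i$ is nonzero because $0$ maps to the fixed point $\infty$, and that the $z_i$ are distinct -- are fine.)

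For comparison, the paper disposes of this lemma with a single citation: the polynomial $P(k,x)$ is extracted directly from the proof of Proposition 3 in Manes's paper, where the period-five locus of $\phi_{k,b}$ is analyzed and the relevant cubic relation is derived explicitly; the lemma is then immediate because that derivation is a universal algebraic identity valid over any field of characteristic $0$. Your proposed route -- constructing a cyclic, sign-flip-invariant function of the orbit and verifying the relation by Gr\"obner basis or resultant elimination -- is essentially a reconstruction of what Manes does, and if carried to completion it would yield a self-contained proof. As it stands, though, the decisive computation is missing, so the argument does not close.
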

\begin{proof}
This follows immediately from the proof of \cite[p. 683, Prop. 3]{manes}.
\end{proof}

\begin{lem}\label{manes_poly_exc_set}
The exceptional set $\calE(P)$ is finite.
\end{lem}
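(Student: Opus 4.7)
The plan is to apply Algorithm \ref{hit_data_algorithm} to $P$ and then invoke Proposition \ref{algorithm_output_prop} to reduce the finiteness of $\calE(P)$ to the finiteness of rational points on certain auxiliary curves.

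First, I would compute the Galois group $G = \Gal(P/\Q(t))$ using the methods of Section \ref{algorithm_section}. Since $\deg_x P = 3$, $G$ is a subgroup of $S_3$, so there are at most two conjugacy classes of maximal subgroups of $G$ to consider. Running the fixed-field procedure (Algorithm \ref{fixed_field_algorithm}) on representatives of these classes produces monic irreducible polynomials $f_1,\ldots,f_r \in \Q[t][x]$ (with $r \le 2$) generating the fixed fields of the $M_i$; one of these, when $P$ is irreducible, is essentially $P$ itself, and another is the quadratic resolvent associated with $\disc P$. The algorithm simultaneously outputs the finite exceptional set $D$ appearing in Proposition \ref{algorithm_output_prop}.

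By Proposition \ref{algorithm_output_prop}, any $c \in \calE(P) \setminus D$ is the $t$-coordinate of a rational point on one of the affine curves $C_i : f_i(t,x) = 0$. Therefore it is enough to show that $C_i(\Q)$ is finite for every $i$, since then $\calE(P) \subseteq D \cup \bigcup_i \pi_t(C_i(\Q))$, where $\pi_t$ denotes projection onto the $t$-coordinate. For each $C_i$ I would compute its geometric genus. If every $C_i$ has genus at least $2$, Faltings' theorem immediately yields the desired finiteness. If some $C_i$ has genus $1$, I would compute an elliptic model of its Jacobian and verify that its Mordell--Weil rank over $\Q$ is zero, handling the torsion contribution by hand. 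If any $C_i$ has genus $0$, one must rule out the existence of rational points (for instance via a local obstruction), since otherwise a rational parametrization would force $\calE(P)$ to be infinite, contradicting the claim.

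The main obstacle is the genus and rank analysis of the $f_i$. Because the coefficients $a,b,c$ of $P$ have comparatively large degrees, the resolvent polynomial for the index-$2$ fixed field and the defining polynomial of the index-$3$ fixed field will be somewhat bulky, and computing their genera (e.g.\ via a desingularization or an Euler characteristic calculation) is the step most likely to be delicate. In the event that a genus-$1$ component arises, the bottleneck becomes a rigorous rank-zero verification, typically via a $2$-descent on an explicit Weierstrass model; this is routine for curves of small conductor but can require extra work (or reliance on \textsc{Magma}'s elliptic-curve machinery) in general. Once each $C_i(\Q)$ is shown to be finite, combining these finiteness statements with the finiteness of $D$ completes the proof.
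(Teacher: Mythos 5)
Your proposal is correct and follows essentially the same route as the paper: identify the two maximal-subgroup fixed fields of the $S_3$ Galois group (the cubic extension generated by $P$ itself and the quadratic resolvent $x^2 = f(t)$), reduce via the explicit Hilbert irreducibility theorem to finiteness of rational points on the corresponding curves, and apply Faltings. In the paper the genera turn out to be $4$ and $5$, so the genus-$0$ and genus-$1$ contingencies you prepare for do not arise.
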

\begin{proof}
The polynomial $P$ is irreducible and its Galois group $G$ is isomorphic (as a group of permutations of the roots of $P$) to the symmetric group $S_3$. Up to conjugacy, $G$ has exactly two maximal subgroups, namely the unique subgroup $A$ of order 3 (which is isomorphic to the alternating group $A_3$), and a subgroup $B$ of order 2. The fixed fields of these subgroups, $F_A$ and $F_B$, are therefore extensions of $\Q(t)$ of degrees 2 and 3, respectively. Since $P$ is an irreducible cubic polynomial over $\Q(t)$, it generates a cubic extension of $\Q(t)$; thus, by replacing $B$ with a conjugate subgroup if necessary, we may assume that $F_B$ is generated by $P$. 

Using \textsc{Magma} to compute the group $G$ and fixed field $F_A$, we find that the extension $F_A/\Q(t)$ is generated by a root of the polynomial $x^2-f(t)$, where 
\begin{multline*}
f(t)= 256t^{12} + 256t^{11} + 576t^{10} + 384t^9 + 608t^8 + 336t^7 \\
 +432t^6 + 184t^5 + 193t^4 + 60t^3 + 50t^2 + 8t + 5. 
\end{multline*}
By Theorem \ref{main_hit_thm}, in order to show that the exceptional set of $P$ is finite it suffices to show that the curves defined by $P(t,x)=0$ and $x^2=f(t)$ have only finitely many rational points. The genera of these curves are 4 and 5, respectively, so the result is a consequence of Faltings's theorem.
\end{proof}

We can now prove our main result for this section.

\begin{proof}[Proof of Proposition \ref{manes_specializations_prop}]
Let $S=\calE(P)\cup\{0,-1/2\}$, which is a finite set by Lemma \ref{manes_poly_exc_set}. Suppose that $k\in\Q\setminus S$ and $b\in\Q^{\ast}$. Since $k\notin\calE(P)$, the polynomial $P(k,x)$ is irreducible and its Galois group is isomorphic to $\Gal(P)\cong S_3$. By Proposition \ref{decomposition_group_prop} this implies that the Galois group of $P(k,x)$ acts on the roots of this polynomial in the same way that $S_3$ acts on the set $\{1,2,3\}$.

Let $\calP$ denote the set of primes $p$ such that $P(k,x)$ has a root in $\Q_p$. For $i=1,2,3$, let $U_i$ be the stabilizer of $i$ in $S_3$. The Chebotarev Density Theorem (see \cite[Thm. 2]{berend-bilu} or \cite[Thm. 2.1]{krumm_lgp}) implies that the Dirichlet density of $\calP$ is given by
\[\frac{\vert\bigcup_{i=1}^3U_i\vert}{|S_3|}=\frac{2}{3}.\]
The complement of $\calP$ therefore has density $1/3$. Now, if $p$ is a prime not in $\calP$, then $P(k,x)$ does not have a root in $\Q_p$, and hence, by Lemma \ref{manes_equation}, the map $\phi_{k,b}(z)$ has no point of period five in $\Q_p$. This completes the proof of the proposition.
\end{proof}

\subsection*{Acknowledgements} The first author would like to thank Pierre D\`ebes for several helpful discussions related to the material of \S\ref{hit_section}. The second author would like to thank Claus Fieker and A.-Stephan Elsenhans for 
their help in working with the Galois group implementation of Fieker, which has been extended by Elsenhans.

\begin{bibdiv}
\begin{biblist}

\bib{Belabas2004641}{article}{
   author={Belabas, Karim},
   title={A relative van Hoeij algorithm over number fields},
   journal={J. Symbolic Comput.},
   volume={37},
   date={2004},
   number={5},
   pages={641--668},
}

\bib{berend-bilu}{article}{
   author={Berend, Daniel},
   author={Bilu, Yuri},
   title={Polynomials with roots modulo every integer},
   journal={Proc. Amer. Math. Soc.},
   volume={124},
   date={1996},
   number={6},
   pages={1663--1671}
}

\bib{bosma-cannon-playoust}{article}{
   author={Bosma, Wieb},
   author={Cannon, John},
   author={Playoust, Catherine},
   title={The Magma algebra system. I. The user language},
   journal={J. Symbolic Comput.},
   volume={24},
   date={1997},
   number={3-4},
   pages={235--265},
}

\bib{cannon-holt}{article}{
   author={Cannon, John},
   author={Holt, Derek F.},
   title={Computing maximal subgroups of finite groups},
   journal={J. Symbolic Comput.},
   volume={37},
   date={2004},
   number={5},
   pages={589--609},
}

\bib{debes-walkowiak}{article}{
   author={D\`ebes, Pierre},
   author={Walkowiak, Yann},
   title={Bounds for Hilbert's irreducibility theorem},
   journal={Pure Appl. Math. Q.},
   volume={4},
   date={2008},
   number={4},
   pages={1059--1083},
}

\bib{fieker_implementation}{article}{
   AUTHOR = {Fieker, Claus},
   TITLE = {Galois groups of polynomials over $\mathbb{Q}(t)$},
   date={2008},
   NOTE = {\textsc{Magma} implementation}
}

\bib{Fieker2000}{article}{
   author={Fieker, Claus},
   author={Friedrichs, Carsten},
   title={On reconstruction of algebraic numbers},
   conference={
      title={Algorithmic number theory},
      address={Leiden},
      date={2000},
   },
   book={
      series={Lecture Notes in Comput. Sci.},
      volume={1838},
      publisher={Springer, Berlin},
   },
   date={2000},
   pages={285--296},
}

\bib{fieker-kluners}{article}{
   author={Fieker, Claus},
   author={Kl\"{u}ners, J\"{u}rgen},
   title={Computation of Galois groups of rational polynomials},
   journal={LMS J. Comput. Math.},
   volume={17},
   date={2014},
   number={1},
   pages={141--158}
}

\bib{flynn-poonen-schaefer}{article}{
   author={Flynn, E. V.},
   author={Poonen, Bjorn},
   author={Schaefer, Edward F.},
   title={Cycles of quadratic polynomials and rational points on a genus-$2$
   curve},
   journal={Duke Math. J.},
   volume={90},
   date={1997},
   number={3},
   pages={435--463},
}

\bib{Geissler}{thesis}{
author = {Gei{\ss{}}ler, K.},
title = {{B}erechnung von {G}aloisgruppen {\"u}ber {Z}ahl- und {F}unktionenk{\"o}rpern},
school = {Technische Universit{\"a}t Berlin},
year = {2003},
}

\bib{geissler_klueners}{article}{
   author={Geissler, Katharina},
   author={Kl\"{u}ners, J\"{u}rgen},
   title={Galois group computation for rational polynomials},
   note={Algorithmic methods in Galois theory},
   journal={J. Symbolic Comput.},
   volume={30},
   date={2000},
   number={6},
   pages={653--674},
}

\bib{hulpke}{article}{
   author={Hulpke, Alexander},
   title={Techniques for the computation of Galois groups},
   conference={
      title={Algorithmic algebra and number theory},
      address={Heidelberg},
      date={1997},
   },
   book={
      publisher={Springer, Berlin},
   },
   date={1999},
   pages={65--77},
}

\bib{klueners_subfield}{article}{
   author={Kl\"{u}ners, J\"{u}rgen},
   title={Algorithms for function fields},
   journal={Experiment. Math.},
   volume={11},
   date={2002},
   number={2},
   pages={171--181},
}

\bib{kluners-malle}{article}{
   author={Kl\"{u}ners, J\"{u}rgen},
   author={Malle, Gunter},
   title={Explicit Galois realization of transitive groups of degree up to
   15},
   note={Algorithmic methods in Galois theory},
   journal={J. Symbolic Comput.},
   volume={30},
   date={2000},
   number={6},
   pages={675--716}
}

\bib{krumm_lgp}{article}{
   author={Krumm, David},
   title={A local-global principle in the dynamics of quadratic polynomials},
   journal={Int. J. Number Theory},
   volume={12},
   date={2016},
   number={8},
   pages={2265--2297},
}

\bib{lang_algebra}{book}{
   author={Lang, Serge},
   title={Algebra},
   series={Graduate Texts in Mathematics},
   volume={211},
   edition={3},
   publisher={Springer-Verlag, New York},
   date={2002}
}

\bib{lorenzini}{book}{
   author={Lorenzini, Dino},
   title={An invitation to arithmetic geometry},
   series={Graduate Studies in Mathematics},
   volume={9},
   publisher={American Mathematical Society, Providence, RI},
   date={1996}
}

\bib{morton-silverman}{article}{
   author={Morton, Patrick},
   author={Silverman, Joseph H.},
   title={Rational periodic points of rational functions},
   journal={Internat. Math. Res. Notices},
   date={1994},
   number={2},
   pages={97--110}
}

\bib{manes}{article}{
   author={Manes, Michelle},
   title={$\Bbb Q$-rational cycles for degree-2 rational maps having an
   automorphism},
   journal={Proc. Lond. Math. Soc.},
   volume={96},
   date={2008},
   number={3},
   pages={669--696}
}

\bib{neukirch}{book}{
   author={Neukirch, J\"{u}rgen},
   title={Algebraic number theory},
   series={Grundlehren der Mathematischen Wissenschaften [Fundamental
   Principles of Mathematical Sciences]},
   volume={322},
   publisher={Springer-Verlag, Berlin},
   date={1999}
}

\bib{poonen}{article}{
   author={Poonen, Bjorn},
   title={The classification of rational preperiodic points of quadratic
   polynomials over ${\bf Q}$: a refined conjecture},
   journal={Math. Z.},
   volume={228},
   date={1998},
   number={1},
   pages={11--29}
}

\bib{serre_topics}{book}{
   author={Serre, Jean-Pierre},
   title={Topics in Galois theory},
   series={Research Notes in Mathematics},
   volume={1},
   edition={2},
   publisher={A K Peters, Ltd., Wellesley, MA},
   date={2008},
}

\bib{stauduhar}{article}{
   author={Stauduhar, Richard P.},
   title={The determination of Galois groups},
   journal={Math. Comp.},
   volume={27},
   date={1973},
   pages={981--996}
}

\bib{stichtenoth}{book}{
   author={Stichtenoth, Henning},
   title={Algebraic function fields and codes},
   series={Graduate Texts in Mathematics},
   volume={254},
   edition={2},
   publisher={Springer-Verlag, Berlin},
   date={2009},
}

\bib{stoll_survey}{article}{
   author={Stoll, Michael},
   title={Rational points on curves},
   journal={J. Th\'{e}or. Nombres Bordeaux},
   volume={23},
   date={2011},
   number={1},
   pages={257--277}
}

\bib{sutherland}{article}{
   author={Sutherland, Nicole},
   title={Computing Galois groups of polynomials (especially over function
   fields of prime characteristic)},
   journal={J. Symbolic Comput.},
   volume={71},
   date={2015},
   pages={73--97}
}

\bib{suth_thesis}{thesis}{
author = {Sutherland, Nicole},
title = {Algorithms for Galois extensions of global function fields},
year = {2015},
school = {The University of Sydney}
}

\end{biblist}
\end{bibdiv}

\end{document}